\documentclass[11pt,a4paper]{amsart}
\usepackage{latexsym}
\usepackage{prettyref}
\usepackage{float}
\usepackage{amsmath}
\usepackage[dvips]{graphicx}
\usepackage{graphics}
\usepackage{amssymb}
\usepackage{graphicx,psfrag}

\usepackage{hyperref}

\begin{document}

\newcommand{\RR}{\mathbb{R}^{2}}
\newcommand{\hh}[1]{\mathcal{H}_{\delta}^{#1}}
\newcommand{\HH}[1]{\mathcal{H}^{#1}}
\newcommand{\h}{\mathfrak{h}}
\newcommand{\e}{\varepsilon}
\newcommand{\N}{\mathbb{N}}
\newcommand{\R}{\mathbb{R}}
\newcommand{\RN}{\mathbb{R}^{n}}
\newcommand{\s}{\mathbb{S}}
\newcommand{\sub}{\subseteq}
\newcommand{\diam}{\text{diam}}

\newcommand{\D}{\displaystyle}

\renewcommand{\a}{\mathfrak{a}}
\renewcommand{\b}{\mathfrak{b}}

\date{}

\newtheorem{theorem}{Theorem}[section]
\newtheorem{lemma}[theorem]{Lemma}
\newtheorem{corollary}[theorem]{Corollary}
\newtheorem{proposition}[theorem]{Proposition}

\theoremstyle{definition}
\newtheorem{definition}[theorem]{Definition}
\newtheorem{example}[theorem]{Example}
\newtheorem{remark}[theorem]{Remark}

\title{Improving dimension estimates for Furstenberg-type sets}

\author{Ursula Molter and Ezequiel Rela}
% \author[U. Molter]{Ursula Molter}
\address{Departamento de
Matem\'atica \\ Facultad de Ciencias Exactas y Naturales\\ Universidad
de Buenos Aires\\ Ciudad Universitaria, Pabell\'on I\\ 1428 Capital
Federal\\ ARGENTINA\\ and CONICET, Argentina}
\email[Ursula Molter]{umolter@dm.uba.ar}
\email[Ezequiel Rela]{erela@dm.uba.ar}
\keywords{Furstenberg sets, Hausdorff dimension, dimension function.}
\subjclass{Primary 28A78, 28A80}

\thanks{This research  is partially supported by
Grants: PICT2006-00177 and UBACyT X149}

%\author[E. Rela]{Ezequiel Rela}
%\address{Departamento de Matem\'atica\\ Facultad de
%Ciencias Exactas y Naturales\\Universidad de Buenos Aires\\1428
%Buenos Aires\\ Argentina.} \email{erela@dm.uba.ar}
%\thanks{}

%\keywords{}
%\subjclass{26D10, 35Q30, 42B20}

\begin{abstract}
In this paper we study the problem of estimating the generalized Hausdorff dimension of Furstenberg sets in the plane. For $\alpha\in(0,1]$, a set $F$ in the plane is said to be an $\alpha$-Furstenberg set if for each direction $e$ there is a line segment $\ell_e$ in the direction of $e$ for which $\dim_H(\ell_e\cap F)\ge\alpha$. It is well known that $\dim_H(F) \geq \max\{2\alpha, \alpha + \frac12\}$ - and it is also known that these sets can have zero measure at their critical dimension. By looking at general Hausdorff measures $\mathcal{H}^h$ defined for doubling functions, that need not be power laws, we obtain finer estimates for the size of the more general $h$-Furstenberg sets. Further, this approach allow us to sharpen the known bounds on the dimension of classical Furstenberg sets. 

The main difficulty we had to overcome, was that if $\mathcal H^{\h}(F) = 0$, there {\em always} exists $g \prec \h$ such that $\mathcal H^g(F) = 0$ (here $\prec$ refers to the natural ordering on general Hausdorff dimension functions). Hence, in order to estimate the measure of general Furstenberg sets, we have to consider dimension functions that are a true {\em step down} from the critical one. We provide rather precise estimates on the size of this step and by doing so, we can include a family of zero dimensional Furstenberg sets associated to dimension functions that grow faster than any power function at zero. With some additional growth conditions on these zero dimensional functions, we extend the known inequalities to include the endpoint $\alpha=0$.

\end{abstract}

\maketitle

\section{Introduction}

In this paper we study dimension properties of sets of Furstenberg type. We are able to sharpen the known bounds about the Hausdorff dimension of these sets using general doubling dimension functions for the estimates.

Let us recall the notion of \textit{Furstenberg sets}. 
For $\alpha$ in $(0,1]$, a subset $E$ of $\RR$ is called \textit{Furstenberg set} or $F_\alpha$-set if for each direction $e$ in the unit circle there is a line segment $\ell_e$ in the direction of $e$ such that the Hausdorff dimension of the set $E\cap\ell_e$ is equal or greater than $\alpha$.

 We will also say that such set $E$ belongs to the class $F_\alpha$. It is known (\cite{wol99b},  see also \cite{wol99a}, \cite{wol02}, \cite{wol03}, \cite{kt02a}, \cite{tao01} for related topics and \cite{KT01}, \cite{taofinite} for a discretized version of this problem) that for any $F_\alpha$-set $E\subseteq \RR$ the Hausdorff dimension ($\dim(E)$) must satisfy the inequality $\dim(E)\ge\max\{2\alpha,\alpha+\frac{1}{2}\}$
and there are examples of $F_\alpha$-sets $E$ with $\dim(E)\le \frac{1}{2}+\frac{3}{2}\alpha$. If we denote by
\begin{equation}
  \gamma(\alpha)=\inf \{\dim(E): E\in F_\alpha\},\nonumber
\end{equation}
then
\begin{equation}\label{eq:dim}
\max\{\alpha+\frac{1}{2} ; 2\alpha\}\le \gamma(\alpha)\le\frac{1}{2}+\frac{3}{2}\alpha,\qquad \alpha\in(0,1].
\end{equation}

In this paper we study a more general notion of Furstenberg sets. To that end we will use a finer notion of dimension already defined by Hausdorff \cite{hau19}. 

\begin{definition}
The following class of functions will be called \textit{dimension functions}. 
\begin{equation}
 \mathbb{H}:=\{h:[0,\infty)\to[0:\infty), 
 \text{non-decreasing, right continuous, } 
h(0)=0 \}.\nonumber
\end{equation} 
\end{definition}

The important subclass of those $h\in\mathbb{H}$ that satisfy a doubling condition will be denoted by $\mathbb{H}_d$:
\begin{equation}
	\mathbb{H}_d:=\left\{h\in\mathbb{H}: h(2x)\le C h(x) \text{ for some }C>0\right\}.\nonumber
\end{equation}

\begin{remark}
Clearly, if $h\in\mathbb{H}_d$, the same inequality will hold (with some other constant) if 2 is replaced by any other $\lambda>1$. We also remark that any concave function trivially belongs to $\mathbb{H}_d$.
\end{remark}

 As usual, the $h$-dimensional (outer) Hausdorff  measure $\HH{h}$ will be defined as follows. For a set $E\subseteq\RR$ and $\delta>0$, write
\begin{equation}
	\hh{h}(E)=\inf\left\{\sum_i h(\diam(E_i)):E\subset\bigcup_i^\infty E_i,  \diam(E_i)<\delta \right\}.\nonumber
\end{equation}
Then the $h$-dimensional Hausdorff measure $\HH{h}$ of $E$ is defined by
\begin{equation}
	\HH{h}(E)=\sup_{\delta>0 }\hh{h}(E).\nonumber
\end{equation} 
This notion generalizes the classical $\alpha$-Hausdorff measure to functions $h$ that are different to $x^\alpha$.
It is well known that a set of Hausdorff dimension $\alpha$ can have zero, positive or infinite $\alpha$-dimensional measure. The desirable situation, in general, is to work with a set which is \textit{truly} $\alpha$-dimensional, that is, it has positive and finite $\alpha$-dimensional measure. In this case we refer to this set as an $\alpha$-set.

Now, given an $\alpha$-dimensional set $E$ without this last property, one could expect to find in the class $\mathbb{H}$ an appropriate function $h$ to detect the precise ``size" of it. By that we mean that $0<\HH{h}(E)<\infty$, and in this case $E$ is referred to as an $h$-set.

We mention one example: A Kakeya set is a compact set containing a unit segment in every possible direction. It is known that there are Kakeya sets of zero measure and it is conjectured that they must have full Hausdorff dimension. The conjecture was proven by Davies \cite{dav71} in $\RR$ and remains open for higher dimensions. Since in the class of planar Kakeya sets there are several distinct types of two dimensional sets (i.e. with positive or null Lebesgue measure), one would like to associate a dimension function to the whole class. A dimension function $h\in\mathbb{H}$ will be called the exact Hausdorff dimension function of the class of sets $\mathcal{C}$ if 
\begin{itemize}
\item For every set $E$ in the class $\mathcal{C}$, $\mathcal{H}^h(E)>0$.
\item There are sets $E\in\mathcal{C}$ with $\mathcal{H}^h(E)<\infty$.
\end{itemize}

In the direction of finding the exact dimension of the class of Kakeya sets in $\RR$, Keich  \cite{kei99} has proven that in the case of the Minkowsky dimension 
the exact dimension function is $h(x)=x^2\log(\frac{1}{x})$. For the case of the Hausdorff dimension, he provided some partial results. Specifically, he shows that in this case the exact dimension function $h$ must decrease to zero at the origin faster than $x^2\log(\frac{1}{x})\log\log(\frac{1}{x})^{2+\e}$ for any given $\e>0$, but slower than $x^2\log(\frac{1}{x})$. This notion of speed of convergence to zero will allow us to define a partial order between dimension functions that extends the usual order on the power laws (see \prettyref{def:order}).

In this paper we are interested in the problem of studying the exact Hausdorff dimension of the class of Furstenberg-type sets (for the precise definition, see \prettyref{def:furs}). We are able to find lower bounds for the dimension function, i.e. for a given class of Furstenberg-type sets, we find a dimension function $h$ with the property that any set in the class has positive $\HH{h}$-measure.

For the construction of $h$-sets associated to certain sequences see the work of Cabrelli {\textit{et al}} \cite{CMMS04} (see also \cite{gms07}). We refer to the work of Olsen and Renfro \cite{ols06}, \cite{ols05}, \cite{ols03} for a detailed study of the exact Hausdorff dimension of the Liouville numbers $\mathbb{L}$, which is a known example of a zero dimensional set. Moreover, the authors prove that this is also a dimensionless set, i.e. there is no $h\in \mathbb{H}$ such that $0<\mathcal{H}^h(\mathbb{L})<\infty$ (equivalently, for any dimension function $h$, one has $\mathcal{H}^h(\mathbb{L})\in\{0,\infty\}$). In that direction, further improvements are due to Elekes and Keleti \cite{ek06}. There the authors prove much more than that there is no exact Hausdorff-dimension function for the set $\mathbb{L}$ of Liouville numbers: they prove that for any translation invariant Borel measure $\mathbb{L}$ is either of measure zero or has non-sigma-finite measure. So in particular they answer the more interesting question that there is no exact Hausdorff-dimension function for $\mathbb{L}$ even in the stronger sense when requiring only sigma-finiteness instead of finiteness.

If one only looks at the power functions, there is a natural total order given by the exponents. In $\mathbb{H}$ we also have a natural notion of order, but we can only obtain a \textit{partial} order.
\begin{definition}\label{def:order}
Let $g,h$ be two dimension functions. We will say that $g$ is dimensionally smaller than $h$ and write $g\prec h $ if and only if
	\begin{equation}
	\lim_{x\to 0^+}\dfrac{h(x)}{g(x)}=0.\nonumber
	\end{equation}	
\end{definition}

\begin{remark}
 Note that this partial order, restricted to the class of power functions, recovers the natural order mentioned above. That is,
\begin{equation}
	x^\alpha\prec x^\beta \iff \alpha<\beta.\nonumber
\end{equation} 
\end{remark}

Now we can make a precise statement of the problem. We begin with the definition of the Furstenberg-type sets.

\begin{definition}\label{def:furs}
Let $\h$ be a dimension function. A set $E\subseteq\RR$ is a Furstenberg set of type $\h$, or an $F_\h$-set, if for each direction $e\in\s$ there is a line segment $\ell_e$ in the direction of $e$ such that  $\HH{\h}(\ell_e \cap E)>0$. 	
\end{definition}
Note that this hypothesis is stronger than the one used to define the original Furstenberg-$\alpha$ sets. However, the hypothesis $\dim(E\cap\ell_e)\ge \alpha$ is equivalent to $\HH{\beta}(E\cap\ell_e)>0$ for any $\beta$ smaller than $\alpha$. If we use the wider class of dimension functions introduced above, the natural way to define $F_\h$-sets would be to replace the parameters $\beta<\alpha$ with two dimension functions satisfying the relation $h\prec \h$. But requiring  $E\cap\ell_e$ to have positive $\HH{h}$ measure for any $h\prec \h$ implies that it has also positive $\HH{\h}$ measure (Theorem 42, \cite{Rog70}).

Due to the existence of $F_\alpha$-sets with $\HH{\alpha}(E\cap\ell_e)=0$ for each $e$, it will be useful to introduce the following subclass of $F_\alpha$:

\begin{definition}
A set $E\subseteq\RR$ is an $F^+_\alpha$-set if for each $e\in\s$ there is a line segment $\ell_e$ such that  $\HH{\alpha}(\ell_e \cap E)>0$. 	
\end{definition}

\begin{remark}\label{rem:hypot}
	Given an $F_\h$-set $E$ for some $\h\in\mathbb{H}$, it is always possible to find two constants $m_E,\delta_E>0$ and a set $\Omega_E\sub\s$ of positive $\sigma$-measure such that
	\begin{equation}
			\hh{\h}(\ell_e\cap E)>m_E>0 \qquad \forall\delta<\delta_E\quad,\quad \forall e\in\Omega_E.\nonumber
	\end{equation} 
\end{remark}

For each $e\in\s$, there is a positive constant $m_e$ such that $\HH{\h}(\ell_e\cap E)>m_e$. Now consider the following pigeonholing argument. Let $\Lambda_n=\{e\in\s: \frac{1}{n+1}\le m_e<\frac{1}{n}\}$. At least one of the sets must have positive measure, since $\s=\cup_n \Lambda_n $. Let $\Lambda_{n_0}$ be such set and take $0<2m_E<\frac{1}{n_0+1}$. Hence
\begin{equation}
	\HH{\h}(\ell_e\cap E)>2m_E>0\nonumber
\end{equation} 
for all $e\in\Lambda_{n_0}$
Finally, again by pigeonholing, we can find $\Omega_E\subseteq\Lambda_{n_0}$ of positive measure and $\delta_E>0$ such 
\begin{equation}\label{eq:hypot}
	\hh{\h}(\ell_e\cap E)>m_E>0\qquad \forall e\in\Omega_E \quad \forall \delta<\delta_E.
\end{equation} 

To simplify notation throughout the paper, since inequality \prettyref{eq:hypot} holds for any Furstenberg set and we will only use the fact that $m_E$, $\delta_E$ and $\sigma(\Omega_E)$ are positive, it will be enough to consider the following definition of $F_\h$-sets:
\begin{definition}\label{def:general}
Let $\h$ be a dimension function. A set $E\subseteq\RR$ is Furstenberg set of type $\h$, or an $F_\h$-set, if for each $e\in\s$ there is a line segment $\ell_e$ in the direction of $e$ such that  $\hh{\h}(\ell_e \cap E)>1$ for all $\delta<\delta_E$ for some $\delta_E>0$.
\end{definition}

The purpose of this paper is to obtain an estimate of the dimension of an $F_\h$-set. By analogy to the classical estimate \prettyref{eq:dim}, we first note that if $\h$ is a general dimension function (not $x^\alpha$), $\alpha+\frac{1}{2}$ translates to $\h\sqrt{\cdot}$ and $2\alpha$ to $\h^2$. Hence, when aiming to obtain an estimate of the Hausdorff measure of our set $E$, the naive approach would be to prove that if a dimension function $h$ satisfies 
\begin{equation}\label{eq:prec01}
	 h\prec \h^2 \qquad \text{ or } \qquad h\prec \h\sqrt{\cdot},
\end{equation}
then $\HH{h}(E)>0$. However, there is no hope to obtain such a general result, since for the special case of the identity function $h(x)=x$, this requirement would contradict (again by Theorem 42, \cite{Rog70}) the existence of zero measure planar Kakeya sets. 

Therefore, it is clear that one needs to take a step down from the conjectured dimension function. The main result of this paper is to show that this step does not need to be as big as a power. It can be, for example, just the power of a $\log$. Precisely, we find conditions on the step that guarantee lower bounds on the dimension of $F_\h$-sets. Further, our techniques allow us to analyze Furstenberg-type sets of Hausdorff dimension zero. This can be done considering dimension functions $h$ that are smaller than $x^\alpha$ for any $\alpha>0$.

To keep the analogy with the classical Furstenberg sets, we will introduce the following notation:

\

\begin{definition}
Given two dimension functions $g,h\in\mathbb{H}$, we define the following quotients which are related to the step-size between two functions:

$\Delta_0(g,h)(x):=\Delta_0(x)=\frac{g(x)}{h(x)}$
\qquad	$\Delta_1(g,h)(x):=\Delta_1(x)=\frac{g(x)}{h^2(x)}$.
\end{definition}

When proving the first case of the inequalities in \eqref{eq:prec01}, the relevant quotient is $\Delta_1$, which gives the (better) bound $\dim \ge2\alpha$ in the classical case at the endpoint $\alpha=1$. At the other endpoint, $\alpha=0$, the best bound is $\dim\ge\alpha+\frac{1}{2}$ and the quotient to analyze here in our generalized problem is $\Delta_0$.

This paper is organized as follows. In \prettyref{sec:rem}, we introduce some further notation and prove a preliminary lemma to be used in the remainder of the paper. In  \prettyref{sec:h2} we prove the $\h^2$ bound, in \prettyref{sec:hsqrt+} the $\h\sqrt{\cdot}$ bound  under some positivity assumptions on the function $\h$ and in \prettyref{sec:F0} we drop this last condition to obtain a partial result on the zero dimensional Furstenberg sets. In addition we discuss our methods and study the Furstenberg problem in the extreme case of the counting measure. This is, roughly speaking, the case $\h\equiv 1$.

\section{Remarks, notation and more definitions}\label{sec:rem}

We will use the notation $A\lesssim B$ to indicate that there is a constant $C>0$ such that $A\le C B$, where the constant is independent of $A$ and $B$. By $A\sim B$ we mean that both $A\lesssim B$ and $B\lesssim A$ hold. On the circle $\s$ we consider the arclength measure $\sigma$. By $L^2(\s)$ we mean $L^2(\s,d\sigma)$. For each $e\in\s$, $\ell_e$ will be a unit line segment in the direction $e$. As usual, by a $\delta$-covering of a set $E$ we mean a covering of $E$ by sets $U_i$ with diameters not exceeding $\delta$.
We introduce the following notation:
\begin{definition}
Let $\mathfrak{b}=\{b_k\}_{k\in\N}$ be a decreasing sequence with $\lim b_k=0$. For any family of balls $\mathcal{B}=\{B_j\}$ with $B_j=B(x_j;r_j)$, $r_j\le 1$, and for any set $E$, we define
\begin{equation}\label{eq:jkb}
J^\b_k:=\{j\in\N:b_k< r_j\le b_{k-1}\},
\end{equation}
and 
\begin{equation}
E_k:=E\cap \bigcup_{j\in J^\mathfrak{b}_k}B_j.\nonumber
\end{equation}
In the particular case of the dyadic scale $\mathfrak{b}=\{2^{-k}\}$, we will omit the superscript and denote
\begin{equation}\label{eq:jk}
J_{k}:=\{j\in\N:2^{-k}<r_{j}\le2^{-k+1}\} 
\end{equation} 

\end{definition}

The next lemma introduces a technique we borrow from \cite{wol99b} to decompose  the set of all directions.

\begin{lemma}\label{lem:part}
	Let $E$ be an $F_\h$-set for some $\h\in\mathbb{H}$ and $\a=\{a_k\}_{k\in\N}\in\ell^1$ a non-negative sequence. Let $\mathcal{B}=\{B_j\}$ be a $\delta$-covering of $E$ with $\delta< \delta_E$ and let $E_k$ and $J_k$ be as above. Define
\begin{equation}
	\Omega_k:=\left\{e\in\s: \hh{\h}(\ell_e\cap E_k)\ge \frac{a_k}{2\|\a\|_1}\right\}.\nonumber
\end{equation}
Then $\s=\cup_k \Omega_k$.
\end{lemma}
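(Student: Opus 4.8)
The plan is to fix an arbitrary direction $e\in\s$ and show that $e\in\Omega_k$ for at least one $k$, arguing by contradiction. Since $\delta<\delta_E$, Definition~\ref{def:general} applies and supplies a unit segment $\ell_e$ in the direction $e$ with $\hh{\h}(\ell_e\cap E)>1$. Note that the whole argument takes place at this single fixed scale $\delta$ and with the fixed covering $\mathcal{B}$, so the relevant set function is the premeasure $\hh{\h}$ rather than $\HH{\h}$; recall that $\hh{\h}$ is an outer measure, hence monotone and countably subadditive.

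The key step is to split the mass $\hh{\h}(\ell_e\cap E)$ along the dyadic radius scales. The index sets $J_k$ of \eqref{eq:jk} partition the index set of $\mathcal{B}$: every ball has radius $r_j\le 1$, so there is exactly one $k\ge 1$ with $2^{-k}<r_j\le 2^{-k+1}$. Since $\mathcal{B}$ covers $E$, this gives $E=\bigcup_k E_k$, and intersecting with $\ell_e$ yields $\ell_e\cap E=\bigcup_k(\ell_e\cap E_k)$. Countable subadditivity of $\hh{\h}$ then gives
\begin{equation}
1<\hh{\h}(\ell_e\cap E)\le\sum_{k}\hh{\h}(\ell_e\cap E_k).\nonumber
\end{equation}
If $e$ belonged to no $\Omega_k$, we would have $\hh{\h}(\ell_e\cap E_k)<\frac{a_k}{2\|\a\|_1}$ for every $k$; summing over $k$ and using $\a\in\ell^1$ would give $\hh{\h}(\ell_e\cap E)\le\frac{1}{2\|\a\|_1}\sum_k a_k=\tfrac12$, contradicting the displayed inequality. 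Hence $e\in\bigcup_k\Omega_k$, and since $e$ was arbitrary, $\s=\bigcup_k\Omega_k$.

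I do not anticipate a genuine obstacle here: the content is just a weighted pigeonholing, the weights $\tfrac{a_k}{2\|\a\|_1}$ being chosen precisely so that their total is $\tfrac12<1$, which leaves room against the lower bound $1$ coming from the $F_\h$-hypothesis. The only points that require a moment of care are that one should assume $\|\a\|_1>0$ so that the thresholds are meaningful; that $\delta$ and $\mathcal{B}$ must be held fixed so that all the quantities $\hh{\h}(\ell_e\cap E_k)$ refer to the same scale; and that some $E_k$ may be empty, which is harmless since then $\hh{\h}(\ell_e\cap E_k)=0$. This is essentially the same device used earlier to extract \eqref{eq:hypot} in Remark~\ref{rem:hypot}.
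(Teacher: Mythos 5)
Your proof is correct and follows essentially the same route as the paper's: assume a direction $e$ lies in no $\Omega_k$, use that the $J_k$ partition the covering so $\ell_e\cap E=\bigcup_k(\ell_e\cap E_k)$, and derive $1<\hh{\h}(\ell_e\cap E)\le\sum_k\frac{a_k}{2\|\a\|_1}=\frac12$ by subadditivity, a contradiction. The extra remarks (fixing $\delta$ and $\mathcal{B}$, $\|\a\|_1>0$, empty $E_k$) are sensible but do not change the argument.
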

\begin{proof}
Clearly $\Omega_k\subset \s$. To see why $\s=\cup_k \Omega_k$, assume that there is a direction $e\in\s$ that is not in any of the $\Omega_k$. Then for that direction we would have that
\begin{equation}
	1<\hh{\h}(\ell_e\cap E)\le\sum_k \hh{\h}(\ell_e\cap E\cap\bigcup_{j\in J_k}B_j)
	\le \sum_k\frac{1\ a_k}{2\|\a\|_1}=\frac{1}{2},\nonumber
\end{equation}
which is a contradiction.
\end{proof}

As a final remark we note that in the following sections our aim will be to prove essentially
\begin{equation}
	\sum_j h(r_j)\gtrsim 1,
\end{equation}  
provided that $h$ is a small enough dimension function. The idea will be to use the dyadic partition of the covering to obtain that
\begin{equation}
	\sum_j h(r_j)\gtrsim \sum_{k=0}^\infty h(2^{-k})\#J_k.\nonumber
\end{equation}  
The lower bounds we need will be obtained if we can prove lower bounds on the quantity $J_k$ in terms of the function $h$ but independent of the covering.

\section{
The \texorpdfstring{$\h\to \h^2$}{hah2} bound }\label{sec:h2}

In this section we generalize the first inequality of \prettyref{eq:dim}, that is, $\dim(E)\ge 2\alpha$ for any $F_\alpha$-set. For this, given a dimension function $h\prec \h^2$, we impose some sufficient growth conditions on the gap $\Delta_1(x):=\frac{h(x)}{\h^2(x)}$ to ensure that $\HH{h}(E)>0$. We have the following theorem:

\begin{theorem}\label{thm:htoh2}
	Let $\h\in\mathbb{H}_d$ be a dimension function and let $E$ be an $F_\h$-set. Let $h\in\mathbb{H}$  such that $h\prec \h^2$. If ${\D\sum_k} \h(2^{-k})\sqrt{\frac{k}{h(2^{-k})}}<\infty$, then $\HH{h}(E)>0$.
\end{theorem}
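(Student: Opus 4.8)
The plan is to bound $\mathcal{H}^h(E)$ from below by estimating $\sum_j h(r_j)$ for an arbitrary $\delta$-covering $\mathcal{B}=\{B_j\}$ of $E$, with $\delta<\delta_E$, following the scheme outlined at the end of Section~\ref{sec:rem}: decompose the covering into dyadic scales $J_k$ as in \eqref{eq:jk}, apply Lemma~\ref{lem:part} with a cleverly chosen summable sequence $\a=\{a_k\}$, and obtain a lower bound on $\#J_k$ that is independent of the covering. The natural choice, dictated by the hypothesis $\sum_k \h(2^{-k})\sqrt{k/h(2^{-k})}<\infty$, is to take $a_k \sim \h(2^{-k})\sqrt{k/h(2^{-k})}$ (normalized to have unit $\ell^1$ norm), so that $\|\a\|_1<\infty$.

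First I would fix $k$ and work with the set $E_k = E\cap\bigcup_{j\in J_k}B_j$ together with the set of directions $\Omega_k$ from Lemma~\ref{lem:part}. For $e\in\Omega_k$ we have $\mathcal{H}^\h_\delta(\ell_e\cap E_k)\gtrsim a_k$. The balls in $J_k$ have radius comparable to $2^{-k}$, so a line $\ell_e$ meeting $E_k$ is covered, within $E_k$, by intervals of length $\sim 2^{-k}$; since $\h\in\mathbb{H}_d$, each such interval contributes $\sim\h(2^{-k})$ to the premeasure, hence the number of balls of $J_k$ that $\ell_e$ meets is $\gtrsim a_k/\h(2^{-k})$. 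This is the key ``one-dimensional'' input: each good direction forces $\ell_e$ to cross at least $N_k := c\,a_k/\h(2^{-k})$ of the balls in the $k$-th generation. The second, and standard, ingredient is an incidence/geometric-measure bound: a single ball $B_j$ of radius $r_j\le 2^{-k+1}$ can be crossed by lines in directions lying in an arc of measure $\lesssim r_j \lesssim 2^{-k}$ (this is where the boundedness of the line segments and the arclength normalization of $\sigma$ enter). Summing incidences over $e\in\Omega_k$ (using that $\Omega_k$ has positive measure when it carries the bulk, and more carefully over a $2^{-k}$-separated net of directions in $\Omega_k$) and over $j\in J_k$, one gets
\begin{equation}
\sigma(\Omega_k)\cdot N_k \;\lesssim\; \#J_k \cdot 2^{-k},\nonumber
\end{equation}
which, were $\sigma(\Omega_k)$ bounded below, would give $\#J_k\gtrsim 2^{k} a_k/\h(2^{-k})$ and hence $\sum_k h(2^{-k})\#J_k \gtrsim \sum_k 2^k a_k h(2^{-k})/\h(2^{-k})$. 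Choosing $a_k$ so that this last sum diverges while $\sum a_k<\infty$ is exactly the balancing act encoded by $\Delta_1$, and the $\sqrt{\cdot}$ shape of $a_k$ is what makes both hold; a Cauchy--Schwarz step comparing $\sum a_k$ with $\sum 2^k a_k h/\h^2$ cleanly produces the condition $\sum\h(2^{-k})\sqrt{k/h(2^{-k})}<\infty$.

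The main obstacle is that we cannot assume $\sigma(\Omega_k)$ is bounded below for every $k$ — Lemma~\ref{lem:part} only tells us the $\Omega_k$ cover $\s$, so for each covering the ``mass'' may concentrate in different generations. The right way around this is a further pigeonholing: since $\sigma(\s)=\sigma(\bigcup_k\Omega_k)>0$ and the weights $a_k$ are summable, there must exist, for each covering, an index $k_0$ (depending on the covering) with $\sigma(\Omega_{k_0})\gtrsim a_{k_0}$ up to a harmless factor — or more robustly, one runs the incidence argument simultaneously over all $k$ and sums, absorbing the $\sigma(\Omega_k)$ factors by summing $\sum_k\sigma(\Omega_k)\ge\sigma(\s)>0$ after a Cauchy--Schwarz in $k$. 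Concretely I expect the clean route is: apply Cauchy--Schwarz to $\sigma(\s)\le\sum_k\sigma(\Omega_k)$, bound each $\sigma(\Omega_k)\lesssim \#J_k\, 2^{-k}/N_k = \#J_k\,2^{-k}\h(2^{-k})/a_k$, pull out $\sqrt{\#J_k}$ and $\sqrt{h(2^{-k})}$ appropriately so that the ``covering'' part is $\sqrt{\sum_k h(2^{-k})\#J_k}$ and the ``universal'' part is $\sqrt{\sum_k 2^{-2k}\h^2(2^{-k})/(a_k^2 h(2^{-k}))\cdot(\text{a }k\text{ factor})}$; the latter sum is finite precisely by the hypothesis after substituting the chosen $a_k$. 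Rearranging yields $\sum_k h(2^{-k})\#J_k\gtrsim 1$, hence $\sum_j h(r_j)\gtrsim 1$ uniformly in the covering, so $\mathcal{H}^h_\delta(E)\gtrsim 1$ for all $\delta<\delta_E$ and therefore $\mathcal{H}^h(E)>0$. The delicate points to get right are the exact exponents in the Cauchy--Schwarz split (which force the specific $\sqrt{k/h(2^{-k})}$ weight) and making sure the doubling constant of $\h$ and the geometric incidence constant are genuinely independent of $k$ and of the covering.
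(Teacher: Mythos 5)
Your setup is exactly the paper's: dyadic decomposition of the covering, \prettyref{lem:part} with the weight $a_k\sim\h(2^{-k})\sqrt{k/h(2^{-k})}$, and the observation (via Vitali and the doubling of $\h$) that each direction $e\in\Omega_k$ forces $\ell_e$ to meet $\gtrsim a_k/\h(2^{-k})$ balls of the $k$-th generation. But your second ingredient --- the ``standard incidence/geometric-measure bound'' that a single ball $B_j$ of radius $r_j\le 2^{-k+1}$ is crossed by the segments $\ell_e$ only for $e$ in an arc of measure $\lesssim r_j$ --- is false, and this is a genuine gap, not a technicality. The segments $\ell_e$ are completely unrelated to one another: nothing prevents all of them from passing through a single point of $B_j$, in which case every direction of a $2^{-k}$-net crosses $B_j$ and your incidence count $\sigma(\Omega_k)\,N_k\lesssim\#J_k\,2^{-k}$ fails by a factor of order $2^{k}$. (Your arc bound would be correct for lines forced to meet two \emph{fixed, separated} balls --- that is \prettyref{lem:conobolas}, used for the $\h\sqrt{\cdot}$ bound --- but not for lines meeting one ball.) This is precisely the Kakeya phenomenon; if your bound held, the same argument applied with $\h(x)=x$ would contradict the existence of measure-zero Kakeya sets in the plane.

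The paper's substitute for this step is the $L^2$ inequality for the Kakeya maximal function, $\|f^*_\delta\|_2^2\lesssim\log(\frac{1}{\delta})\|f\|_2^2$, i.e.\ \eqref{eq:maximal}: one sets $f=\h(2^{-k})2^k\chi_{F_k}$ with $F_k=\bigcup_{j\in J_k}B_j$, so that $\|f\|_2^2\lesssim\#J_k\,\h^2(2^{-k})$, while the ball-counting step gives the pointwise bound $f^*_{2^{-k+1}}(e)\gtrsim a_k$ on $\Omega_k$; combining, $a_k^2\,\sigma(\Omega_k)\lesssim k\,\#J_k\,\h^2(2^{-k})$, hence $\#J_k\gtrsim\sigma(\Omega_k)/h(2^{-k})$ and $\sum_k h(2^{-k})\#J_k\gtrsim\sum_k\sigma(\Omega_k)\ge\sigma(\s)>0$, with no Cauchy--Schwarz in $k$ needed. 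Note that the unexplained ``$k$ factor'' in your final splitting is exactly the $\log(\frac{1}{\delta})$ loss of the maximal inequality --- a loss known to be necessary --- and it is the sole reason the weight $a_k$ carries a $\sqrt{k}$. Without invoking this inequality (or an equivalent $L^2$/bush-type argument controlling how tubes in distinct directions overlap), the proof does not close.
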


The main tool for the proof of this theorem will be an $L^2$ bound for the Kakeya maximal function on $\RR$. 

For an integrable function on $\RN$, the Kakeya maximal function at  scale $\delta$ will be $f^*_\delta:\s^{n-1}\to \R$,
\begin{equation}
	f^*_\delta(e)=\sup_{x\in\RN}\frac{1}{|T_e^\delta(x)|}\int_{T_e^\delta(x)}|f(x)|\ dx\qquad e\in\s^{n-1},\nonumber
\end{equation}
where $T_e^\delta(x)$ is a $1\times \delta$-tube (by this we mean a tube of length 1 and cross section of radius $\delta$) centered at $x$ in the direction $e$. It is well known that in $\RR$ the Kakeya maximal function satisfies the bound (see \cite{wol99b})

\begin{equation}\label{eq:maximal}
		\big\|f_\delta^*\big\|^2_2\lesssim \log(\frac{1}{\delta})\|f\|_2^2.
\end{equation} 		

It is also known that the $\log$ growth is necessary (see \cite{kei99}), because of the existence of Kakeya sets of zero measure in $\RR$. See also \cite{mit02} for estimates on the Kakeya maximal function with more general measures on the circle.

We now prove \prettyref{thm:htoh2}. We remark that since this theorem says, roughly speaking, that the dimension of an $F_\h$-set should be about $\h^2$, the step down must be taken from this dimension function. This is the role played by $\Delta_1(h,\h^2)(x)=\frac{h(x)}{\h^2(x)}$ in this section.

\begin{proof}
By \prettyref{def:general}, since $E\in F_\h$, we have
\begin{equation}
	\hh{\h}(\ell_e\cap E)>1\nonumber
\end{equation} 
for all $e\in\s$ and for any $\delta<\delta_E$. 

Let $\{B_j\}_{j\in\N}$ be a covering of $E$ by balls with $B_j=B(x_j;r_j)$. We need to bound $\sum_j h(2r_j)$ from below. Since $h$ is non-decreasing, it suffices to obtain the  bound 

\begin{equation}\label{eq:sum}
	\sum_j h(r_j)\gtrsim 1
\end{equation}  
for any $h\in \mathbb{H}$ satisfying the hypothesis of the theorem. Clearly we can restrict ourselves to $\delta$-coverings with $\delta<\frac{\delta_E}{5}$.

Define $\a=\{a_k\}$ with $a_k=\sqrt{\frac{k}{\Delta_1(2^{-k})}}$. Also define, as in the previous section, for each $k\in\N$, $J_k=\{j\in \N: 2^{-k}< r_j\le 2^{-k+1}\}$ and $E_k=E\cap\D\cup_{j\in J_k}B_j$.
Since by hypothesis $\a\in\ell^1$, we can apply \prettyref{lem:part} to obtain the decomposition $\s=\bigcup_k \Omega_k$ associated to this choice of $\a$.

We will apply the maximal function inequality to a weighted union of indicator functions. For each $k$, let $F_k=\D\bigcup_{j\in J_k}B_j$ and define the function 
\[
f:=\h(2^{-k})2^k\D\chi_{F_k}.
\]

We will use the $L^2$ norm estimates for the maximal function.
The $L^2$ norm of $f$ can be easily estimated as follows:
	\begin{eqnarray*}
    	\|f\|_{2}^{2} & = & \h^2(2^{-k})2^{2k}\int_{\cup_{J_k}B_{j}}dx\\
                  & \lesssim & \h^2(2^{-k})2^{2k}\sum_{j\in J_{k}}r_j^2 \\
		& \lesssim & \h^2(2^{-k}) \#J_k,
	\end{eqnarray*}
since $r_j\le 2^{-k+1}$ for $j\in J_k$.
Hence, 
\begin{equation}\label{eq:L2above}
	\|f\|_{2}^{2}\lesssim \#J_k \h^2(2^{-k}).
\end{equation} 
Now fix $k$ and consider the Kakeya maximal function $f^*_\delta$ of level $\delta=2^{-k+1}$ associated to the function $f$ defined for this value of $k$.

In $\Omega_k$ we have the following pointwise lower estimate for the maximal function. Let $\ell_e$ be the line segment such that $\hh{\h}(\ell_e\cap E)>1$, and let $T_e$ be the rectangle of width $2^{-k+2}$ around this segment. 
Define, for each $e\in\Omega_k$, 
\begin{equation}
 J_k(e):=\{j\in J_k: \ell_e\cap E\cap B_j\neq\emptyset\}.
\end{equation}

With the aid of the Vitali covering lemma, we can select a subset of disjoint balls $\widetilde{J}_k(e)\sub J_k(e)$ such that 
\begin{equation}
	\bigcup_{j\in J_k(e)}B_j \sub \bigcup_{j\in\widetilde{J}_k(e)}B(x_j;5r_j).\nonumber
\end{equation} 

Note that every ball $B_j$, $j\in J_k(e)$, intersects $\ell_e$ and therefore at least half of $B_j$ is contained in the rectangle $T_e$, yielding $|T_e\cap B_j|\ge\frac{1}{2}\pi r_j^2$. Hence, by definition of the maximal function, using that $r_j\ge 2^{-k+1}$ for $j\in J_k(e)$,
\begin{eqnarray*}
|f^{*}_{2^{-k+1}}(e)| 	& \geq 	& \frac{1}{|T_e|}\int_{T_e}f\ dx =\frac{\h(2^{-k})2^{k}}{|T_{e}|}\left|T_e\cap \cup_{J_k(e)}B_{j}\right|\\
	& \gtrsim &\h(2^{-k})2^{2k} \left|T_e\cap \cup_{\widetilde{J}_k(e)}B_{j}\right|\\
	& \gtrsim & \h(2^{-k})2^{2k}\sum_{j\in \widetilde{J}_{k}(e)}r^2_j\\
	&\gtrsim &\h(2^{-k})\#\widetilde{J}_{k}(e)\\
	& \gtrsim & \sum_{\widetilde{J}_{k}(e)}\h(r_j).
    \end{eqnarray*}
Now, since
\begin{equation}
	\ell_e\cap E_k\sub \bigcup_{j\in J_k(e)}B_j\sub \bigcup_{j\in \widetilde{J}_k(e)}B(x_j;5r_j)
\end{equation}
and for $e\in\Omega_k$ we have $\hh{\h}(\ell_e\cap E_k)\gtrsim a_k$, we obtain 
\begin{equation}
	|f^{*}_{2^{-k+1}}(e)|
	\gtrsim\sum_{\widetilde{J}_{k}(e)}\h(r_j)
	\gtrsim\sum_{j\in \widetilde{J}_{k}(e)}\h(5r_j)\gtrsim a_k.\nonumber
\end{equation} 

Therefore we have the estimate
\begin{equation}\label{eq:maximalbelow}
	\|f^{*}_{2^{-k+1}}\|_2^2\gtrsim\int_{\Omega_k} |f^{*}_{2^{-k+1}}(e)|^2\ d\sigma \gtrsim a_k^2\ \sigma(\Omega_k)=\frac{\sigma(\Omega_k)k}{\Delta_1(2^{-k})}.
\end{equation} 
Combining \prettyref{eq:L2above},  \prettyref{eq:maximalbelow} and using the maximal inequality \prettyref{eq:maximal}, we obtain
\begin{equation}
	\frac{\sigma(\Omega_k)k}{\Delta_1(2^{-k})}\lesssim \|f^{*}_{2^{-k+1}}\|_2^2\lesssim \log(2^k)\|f\|_2^2\lesssim k\#J_k \h^2(2^{-k}),\nonumber
\end{equation} 
and therefore
\begin{equation}
	\frac{\sigma(\Omega_k)}{h(2^{-k})}\lesssim \#J_k.\nonumber
   \end{equation}

Now we are able to estimate the sum in \prettyref{eq:sum}. Let $h$ be a dimension function satisfying the hypothesis of \prettyref{thm:htoh2}. We have
\begin{eqnarray*}
	\sum_j h(r_j)	& \ge & \sum_k h(2^{-k})\#J_k\\
			& \gtrsim & \sum_k \sigma(\Omega_k) \ge\sigma(\s)>0.
\end{eqnarray*}
\end{proof}

Applying this theorem to the class $F^+_\alpha$, we obtain a sharper lower bound on the generalized Hausdorff dimension:

\begin{corollary}\label{cor:coro1}
	Let $E$ an $F^+_\alpha$-set. If $h$ is any dimension function satisfying $h(x)\ge C x^{2\alpha} \log^{1+\theta}(\frac{1}{x})$ for $\theta>2$ then $\HH{h}(E)>0$.
\end{corollary}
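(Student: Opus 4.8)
The plan is to obtain \prettyref{cor:coro1} directly from \prettyref{thm:htoh2} by specializing the dimension function $\h$ to the power $\h(x)=x^{\alpha}$. First I would note that this choice is admissible: $x^{\alpha}$ is concave for $\alpha\in(0,1]$, hence lies in $\mathbb{H}_d$ by the remark following the definition of $\mathbb{H}_d$. Next, if $E$ is an $F^+_\alpha$-set then $\HH{\alpha}(\ell_e\cap E)>0$ for every $e\in\s$, which is precisely the condition of \prettyref{def:furs} with $\h(x)=x^{\alpha}$; the pigeonholing argument recorded in \prettyref{rem:hypot} then supplies the constants $m_E,\delta_E>0$ and the set $\Omega_E$ of positive measure, so $E$ is an $F_\h$-set in the working sense of \prettyref{def:general}. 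Thus \prettyref{thm:htoh2} applies with $\h=x^{\alpha}$, and all that is left is to check its two hypotheses on the competitor $h$.

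For the first hypothesis, $h\prec\h^{2}=x^{2\alpha}$, the assumed lower bound gives, for small $x$,
\[
\frac{\h^{2}(x)}{h(x)}=\frac{x^{2\alpha}}{h(x)}\le\frac{1}{C\log^{1+\theta}(1/x)}\longrightarrow 0\quad\text{as }x\to0^{+},
\]
which is in fact true for every $\theta>-1$, hence a fortiori for $\theta>2$. For the second hypothesis, the summability of $\sum_k\h(2^{-k})\sqrt{k/h(2^{-k})}$, I would substitute $\h(2^{-k})=2^{-k\alpha}$ together with
\[
h(2^{-k})\ge C\,(2^{-k})^{2\alpha}\log^{1+\theta}(2^{k})=C\,(\log 2)^{1+\theta}\,2^{-2k\alpha}k^{1+\theta}\gtrsim 2^{-2k\alpha}k^{1+\theta},
\]
so that each term is bounded by
\[
2^{-k\alpha}\sqrt{\frac{k}{2^{-2k\alpha}k^{1+\theta}}}=2^{-k\alpha}\cdot 2^{k\alpha}\,k^{-\theta/2}=k^{-\theta/2}.
\]
Since $\theta>2$, the series $\sum_k k^{-\theta/2}$ converges, so the hypothesis holds and \prettyref{thm:htoh2} yields $\HH{h}(E)>0$.

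There is no genuine obstacle here: the corollary is a clean instance of the main theorem, and the threshold $\theta>2$ is exactly what makes the geometric factor $2^{k\alpha}$ cancel, leaving a convergent $p$-series with $p=\theta/2>1$. The only small points deserving attention are that $h$ is merely assumed to lie in $\mathbb{H}$ — \prettyref{thm:htoh2} does not require $h$ to be doubling, so the family of admissible $h$ is genuinely large — and that one should absorb the harmless constants $C$ and $(\log 2)^{1+\theta}$ into the $\lesssim$ when comparing $\log^{1+\theta}(2^{k})$ with $k^{1+\theta}$, and recall that only the behaviour of $h$ near $0$ is relevant. No new analytic input (in particular, no fresh use of the Kakeya maximal inequality) is needed, since all of it is already packaged inside \prettyref{thm:htoh2}.
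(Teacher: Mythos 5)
Your proposal is correct and is exactly the route the paper intends (the paper gives no written proof of \prettyref{cor:coro1}, only the phrase ``applying this theorem to the class $F^+_\alpha$''): you specialize \prettyref{thm:htoh2} to $\h(x)=x^{\alpha}$, verify $h\prec\h^{2}$ from the logarithmic gap, and check that the series reduces to $\sum_k k^{-\theta/2}$, convergent precisely for $\theta>2$. The substitution and the bookkeeping with the constants are all as they should be, so nothing further is needed.
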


\begin{remark}
	At the endpoint $\alpha=1$, this estimate is worse than the one due to Keich. He obtained, using strongly the full dimension of a ball in $\RR$, that if $E$ is an $F^+_1$-set and $h$ is a dimension function satisfying the bound $h(x)\ge C x^2 \log(\frac{1}{x})\left(\log\log(\frac{1}{x})\right)^{\theta}$  for  $\theta>2$, then $\HH{h}(E)>0$.
\end{remark}

\begin{remark}
Note that the proof above relies essentially on the $L^1$ and  $L^2$ size of the ball in $\RR$, not on the dimension function $\h$. Moreover, we only use the ``gap" between $h$ and $\h^2$ (measured by the function $\Delta_1$). This last observation leads to conjecture that this proof can not be used to prove that an $F_\h$-set has positive $\h^2$ measure, since in the case of  $\h(x)=x$, as we remarked in the introduction, this would contradict the existence of Kakeya sets of zero measure in $\RR$.

Also note that the absence of conditions on the function $\h$ allows us to consider the ``zero dimensional'' Furstenberg problem. However, this bound does not provide any substantial improvement, since the zero dimensionality property of the function $\h$ is shared by the function $\h^2$. This is because the proof above, in the case of the $F_\alpha$-sets, gives the worse bound ($\dim(E)\ge 2\alpha$) when the parameter $\alpha$ is in $(0,\frac{1}{2})$. 
\end{remark}

\section{The  \texorpdfstring{$\h\to \h\sqrt{\cdot}$}{hsqrt+} bound
}\label{sec:hsqrt+}

In this section we will turn our attention to those functions $h$ that satisfy the bound $h(x)\lesssim x^\alpha$ for $\alpha\le\frac{1}{2}$. For these functions we are able to improve on the previously obtained bounds. We need to impose some growth conditions on the dimension function $\h$. This conditions can be thought  of as imposing a lower bound on the dimensionality of $\h$ to keep it away from the zero dimensional case. 

\begin{remark} 
\ Throughout this section, the expected dimension function should be about $\h\sqrt{\cdot}$. We therefore need a step down from this function. For this, we will look at the gap $\Delta_0(x)=\frac{h(x)}{\h(x)}$.
\end{remark}

The next lemma says that we can split the $\h$-dimensional mass of a set $E$ contained in an interval $I$ into two sets that are positively separated.

\begin{lemma}\label{lem:split}
 Let $\h\in\mathbb{H}$, $\delta>0$, $I$ an interval and $E\sub I$. Let $\eta>0$ be such that $\h^{-1}(\frac{\eta}{8})<\delta$ and $\hh{\h}(E)\ge\eta>0$. Then there exist two subintervals $I^-$, $I^+$ that are $\h^{-1}(\frac{\eta}{8})$-separated and  with $\hh{\h}(I^{\pm}\cap E)\gtrsim \eta$.
\end{lemma}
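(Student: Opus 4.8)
The plan is to exploit the structure of the real line: since $E\subseteq I$ lives inside a single interval, I can sweep a point $t$ from the left endpoint of $I$ to the right and watch the function $\varphi(t):=\hh{\h}(E\cap(-\infty,t])$. This function starts at $0$, ends at $\hh{\h}(E)\ge\eta$, and is non-decreasing; the only failure of continuity is that it can jump, but a jump at a point $t_0$ corresponds to a contribution of an $\hh{\h}$-atom, and since every covering set of diameter $<\delta$ has $\h$-value at most $\h(\delta)$... actually the cleaner route is the following. I would choose a threshold, say $t^*:=\inf\{t:\varphi(t)\ge \eta/2\}$ (or $\ge \eta/4$ — the exact fraction is adjusted at the end), and set a provisional split of $E$ into $E\cap(-\infty,t^*]$ and $E\cap[t^*,\infty)$. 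By the definition of $t^*$ and subadditivity of $\hh{\h}$ (which gives $\hh{\h}(A\cup B)\le \hh{\h}(A)+\hh{\h}(B)$), both halves carry $\gtrsim\eta$ of the $\hh{\h}$-mass, up to a controlled loss near $t^*$.

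The key step is then to carve out a buffer zone of width $2r$ around $t^*$, where $r:=\h^{-1}(\eta/8)$, and to argue that deleting $E\cap(t^*-r,t^*+r)$ costs very little. Indeed $E\cap(t^*-r,t^*+r)$ is contained in an interval of length $2r$, which can be covered by a single set of diameter $2r$ (or two of diameter $r$); since $\delta>r$ by hypothesis, this is an admissible $\delta$-cover, and hence $\hh{\h}\big(E\cap(t^*-r,t^*+r)\big)\le \h(2r)\lesssim \h(r)=\eta/8$ using the doubling property of $\h$ — wait, the lemma only assumes $\h\in\mathbb{H}$, not $\mathbb{H}_d$, so I should instead cover the buffer by two sets of diameter $r$ each, giving the bound $2\h(r)=\eta/4$, or simply take $r$ slightly smaller so that a single diameter-$r$ cover suffices; either way the cost is an arbitrarily small multiple of $\eta$. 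Then I set
\[
I^- := (-\infty,\,t^*-r], \qquad I^+ := [t^*+r,\,\infty),
\]
intersected with $I$ if one wants bounded intervals, and these are $2r = 2\h^{-1}(\eta/8)$-separated (a fortiori $\h^{-1}(\eta/8)$-separated). Subtracting the buffer cost from the mass of each provisional half, I still have $\hh{\h}(E\cap I^{\pm})\ge \eta/2 - \eta/4 = \eta/4 \gtrsim \eta$, which is the claim.

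The main obstacle I anticipate is the lack of continuity of $\varphi$: $\hh{\h}$ is only finitely subadditive and not a measure, so $t\mapsto\varphi(t)$ need not be continuous and the infimum defining $t^*$ need not be attained with the "right" value on both sides. This is precisely why the buffer argument is essential — rather than hoping for an exact bisection point, I accept a jump of size at most (roughly) $\h(\delta)$ at $t^*$ and absorb it into the same buffer estimate; one just has to be a little careful to take $\eta$ small enough relative to $\delta$ (which is exactly the hypothesis $\h^{-1}(\eta/8)<\delta$) so that the buffer width $r$ stays below $\delta$ and the buffer is coverable by admissible sets. A secondary point to handle cleanly is that $\hh{\h}$ restricted to a subinterval is bounded above by $\hh{\h}$ on the whole set, so the provisional lower bounds $\hh{\h}(E\cap(-\infty,t^*])\ge\eta/2$ and $\hh{\h}(E\cap[t^*,\infty))\ge\eta/2$ genuinely follow from the choice of $t^*$ together with subadditivity applied to $E = (E\cap(-\infty,t^*])\cup(E\cap(t^*,\infty))$.
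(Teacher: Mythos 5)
Your proof is correct and is essentially the paper's argument in a continuous formulation: the paper partitions $I$ into consecutive blocks of length $\h^{-1}(\frac{\eta}{8})$, accumulates mass from the left until it first exceeds $\frac{\eta}{4}$, and then discards the next block, which plays exactly the role of your threshold $t^*$ together with your buffer window of width $2\h^{-1}(\frac{\eta}{8})$ and content at most $\frac{\eta}{4}$. If anything, your bookkeeping via $\varphi(t)=\hh{\h}\bigl(E\cap(-\infty,t]\bigr)$, evaluated at $t^*\pm r$ so that only monotonicity and finite subadditivity of $\hh{\h}$ are ever invoked, is the slightly cleaner way to run the common idea, since the paper instead tracks $\sum_j\hh{\h}(E\cap I_j)$ and at the final step must convert a lower bound on that sum of block contents into a lower bound on $\hh{\h}(I^{\pm}\cap E)$ itself, a superadditivity-flavored passage that your version sidesteps.
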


\begin{proof}
Let $t=\h^{-1}(\frac{\eta}{8})$ and subdivide $I$ in $N$ ($N\ge3$) consecutive (by that we mean that they intersect only at endpoints and leave no gaps between them) subintervals $I_j$ such that $|I_j|=t$ for $1\le j\le N-1$ and $|I_N|\le t$. Since $|I_j|<\delta$ and $\h(|I_j|)\le\frac{\eta}{8}$, we have
\begin{equation}\label{eq:inter}
	\hh{\h}(E\cap I_j)\le \h(|I_j|)\le\frac{\eta}{8}
\end{equation} 
and
\begin{equation}
	\eta \le \hh{\h}(E)=\hh{\h}\left(\bigcup_j E\cap I_j\right)\le\sum_j\hh{\h}(E\cap I_j).\nonumber
\end{equation} 

Now we can group the subintervals in the following way. Let $n$ be the first index for which we have $\sum_{j=1}^n \hh{\h}(E\cap I_j)>\frac{\eta}{4}$.

Since $\sum_{j=1}^{n-1} \hh{\h}(E\cap I_j)\le\frac{\eta}{4}$, and by \prettyref{eq:inter} the mass of each interval is not too large, we have the bound 
\begin{equation}
	\frac{\eta}{4}<\sum_{j=1}^n \hh{\h}(E\cap I_j)\le(\frac{1}{4}+\frac{1}{8})\eta=\frac{3\eta}{8}.\nonumber
\end{equation} 
Take $I^-=I_1\cup\dots\cup I_n$, skip the interval $I_{n+1}$, and consider $I^+$ to be the union of the remaining intervals. It is easy to see that
\begin{equation}
	\sum_{j=1}^{n+1} \hh{\h}(E\cap I_j)\le\frac{\eta}{2},\nonumber
\end{equation} 
and therefore
\begin{equation}
	\sum_{j=n+2}^{N} \hh{\h}(E\cap I_j)\ge\frac{\eta}{2}.\nonumber
\end{equation} 
So, we obtain $\hh{\h}(I^{\pm}\cap E)\ge \frac{\eta}{4}$ and the intervals $I^-$ and $I^+$ are $|I_j|$-separated. But $|I_j|=\h^{-1}(\frac{\eta}{8})$, so the lemma is proved.
\end{proof}

The next lemma will provide estimates for the number of lines with certain separation property that intersect two balls of a given size.

\begin{lemma}\label{lem:conobolas}
Let $\mathfrak{b}=\{b_k\}_{k\in\N}$ be a decreasing sequence with $\lim b_k=0$. Given a family of balls $\mathcal{B}=\{B(x_j;r_j)\}$, we define $J^\b_k$ as in \prettyref{eq:jkb} and let $\{e_i\}_{i=1}^{M_k}$ be a $b_k$-separated set of directions. Assume that for each $i$ there are two line segments $I_{e_i}^+$ and $I_{e_i}^-$ lying on a line in the direction $e_i$  that are $s_k$-separated for some given $s_k$ 
Define $\Pi_k=J^\b_{k}\times
J^\b_{k}\times\{1,..,M_k\}$ and $\mathcal{L}^\b_k$ by

\begin{equation}
\mathcal{L}^\b_k:=\left\{(j_{+},j_{-},i)\in \Pi_k:
    	I_{e_{i}}^-\cap B_{j_-}\neq\emptyset\ 
	I_{e_{i}}^+\cap B_{j_+}\neq\emptyset
 \right\}.\nonumber
\end{equation} 
If $\frac{1}{5}s_k>b_{k-1}$ for all $k$, then
\begin{equation}
\#\mathcal{L}^\b_k\lesssim\frac{b_{k-1}}{b_k}\frac{1}{s_k}\left(\#J^\b_{k}\right)^{2}.\nonumber
\end{equation} 
\end{lemma}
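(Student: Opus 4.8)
The plan is to estimate $\#\mathcal{L}^\b_k$ by fixing a pair of balls and bounding the number of admissible directions that can join them. Fix $k$ and a pair $(j_+,j_-)\in J^\b_k\times J^\b_k$, and set $S:=\{i : (j_+,j_-,i)\in\mathcal{L}^\b_k\}$; it suffices to show $\#S\lesssim \frac{b_{k-1}}{b_k s_k}$ and then sum over the at most $(\#J^\b_k)^2$ such pairs. If $S=\emptyset$ there is nothing to prove, so assume $S\neq\emptyset$ and pick $i_0\in S$. Since $I_{e_{i_0}}^-$ meets $B_{j_-}$, $I_{e_{i_0}}^+$ meets $B_{j_+}$, the two segments are $s_k$-separated, and both balls have radius $\le b_{k-1}$, the triangle inequality gives $D:=|x_{j_+}-x_{j_-}|\ge s_k-2b_{k-1}$. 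The hypothesis $\tfrac15 s_k>b_{k-1}$ then forces $D>\tfrac35 s_k$; in particular $D\gtrsim s_k>0$, so the direction of $x_{j_+}-x_{j_-}$ is well defined.

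Next I would show that every $i\in S$ confines the direction $e_i$ to a thin arc depending only on $(j_+,j_-)$. For $i\in S$ the segments $I_{e_i}^{\pm}$ lie on a single line $\ell_i$ in direction $e_i$ meeting both balls, so $\mathrm{dist}(x_{j_+},\ell_i)\le b_{k-1}$ and $\mathrm{dist}(x_{j_-},\ell_i)\le b_{k-1}$. Decomposing $x_{j_+}-x_{j_-}$ into its components parallel and perpendicular to $\ell_i$, the perpendicular component has length at most $2b_{k-1}$, so the angle $\phi$ between $e_i$ (up to sign) and the direction of $x_{j_+}-x_{j_-}$ satisfies $\sin\phi\le 2b_{k-1}/D$. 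By the first paragraph $2b_{k-1}/D< \tfrac{10}{3}\,b_{k-1}/s_k<\tfrac23<1$, so $\phi\lesssim \sin\phi\lesssim b_{k-1}/s_k$. Hence $\{e_i : i\in S\}$ lies in the union of two arcs of $\s$ (one per direction along $\ell_i$) of total length $\lesssim b_{k-1}/s_k$.

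Since $\{e_i\}_{i=1}^{M_k}$ is $b_k$-separated, an arc of length $L$ contains at most $L/b_k+1$ of the $e_i$; because the segments live inside intervals of length $\lesssim 1$ we have $s_k\lesssim 1$, so $b_{k-1}/(b_k s_k)\ge b_{k-1}/b_k\ge 1$ absorbs the additive constant, giving $\#S\lesssim b_{k-1}/(b_k s_k)$. Summing over the (at most) $(\#J^\b_k)^2$ pairs $(j_+,j_-)$ then yields $\#\mathcal{L}^\b_k=\sum_{(j_+,j_-)}\#S\lesssim \frac{b_{k-1}}{b_k}\,\frac{1}{s_k}\,(\#J^\b_k)^2$, as claimed.

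The main obstacle is the elementary but slightly delicate plane-geometry estimate of the second paragraph: showing that lines meeting two $b_{k-1}$-balls whose centers are $\gtrsim s_k$ apart have directions trapped in an arc of length $\lesssim b_{k-1}/s_k$, and using the hypothesis $\tfrac15 s_k>b_{k-1}$ precisely to guarantee this arc is short enough that $\sin\phi$ and $\phi$ are comparable (so that one genuinely controls the number of $b_k$-separated directions inside it). A secondary nuisance is the additive $+1$ in the count of $b_k$-separated points in a short arc, which is harmless here only because $s_k$ is bounded, i.e. the segments sit inside intervals of bounded length.
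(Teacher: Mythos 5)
Your proof is correct and follows essentially the same route as the paper's: fix a pair of balls, show the admissible directions are confined to an arc of angular width $\lesssim b_{k-1}/s_k$ (the paper draws the tangent-cone picture, you do the equivalent $\sin\phi\le 2b_{k-1}/D$ computation), count $b_k$-separated points in that arc, and sum over pairs. Your explicit handling of the additive $+1$ in the arc count (absorbed because $s_k\lesssim 1$ and $b_{k-1}/b_k\ge 1$) is a detail the paper leaves implicit, but both arguments rest on the same geometry.
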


\begin{proof}
Consider a fixed pair $j_-,j_+$ and its associated $B_{j_-}$ and $B_{j_+}$ 
We will use as distance between two balls the distance between the centers, and for simplicity we denote $d(j_-,j_+)=d(B_{j_-},B_{j_+})$. If $d(j_-,j_+)<\frac{3}{5}s_k$ then there is no $i$ such that $(j_-,j_+,i)$ belongs to $\mathcal{L}^\b_k$.

Now, for  $d(j_-,j_+)\ge\frac{3}{5}s_k$, we will look at the special configuration given by \prettyref{fig:conobolas} when we have $r_{j_-}=r_{j_+}=b_{k-1}$ and the balls are tangent to the ends of $I^-$ and $I^+$. This will give a bound for any possible configuration, since in any other situation the cone of allowable directions is narrower.

\begin{figure}[ht]
       \begin{center}
	\psfrag{bj-}{$B_{j_-}$}
	\psfrag{bj+}{$B_{j_+}$}
	\psfrag{bk-1}{$b_{k-1}$}
	\psfrag{sk}{$s_k$}
	\psfrag{Imenos}{$I^-$}
	\psfrag{Imas}{$I^+$}
			\includegraphics[scale=.4]{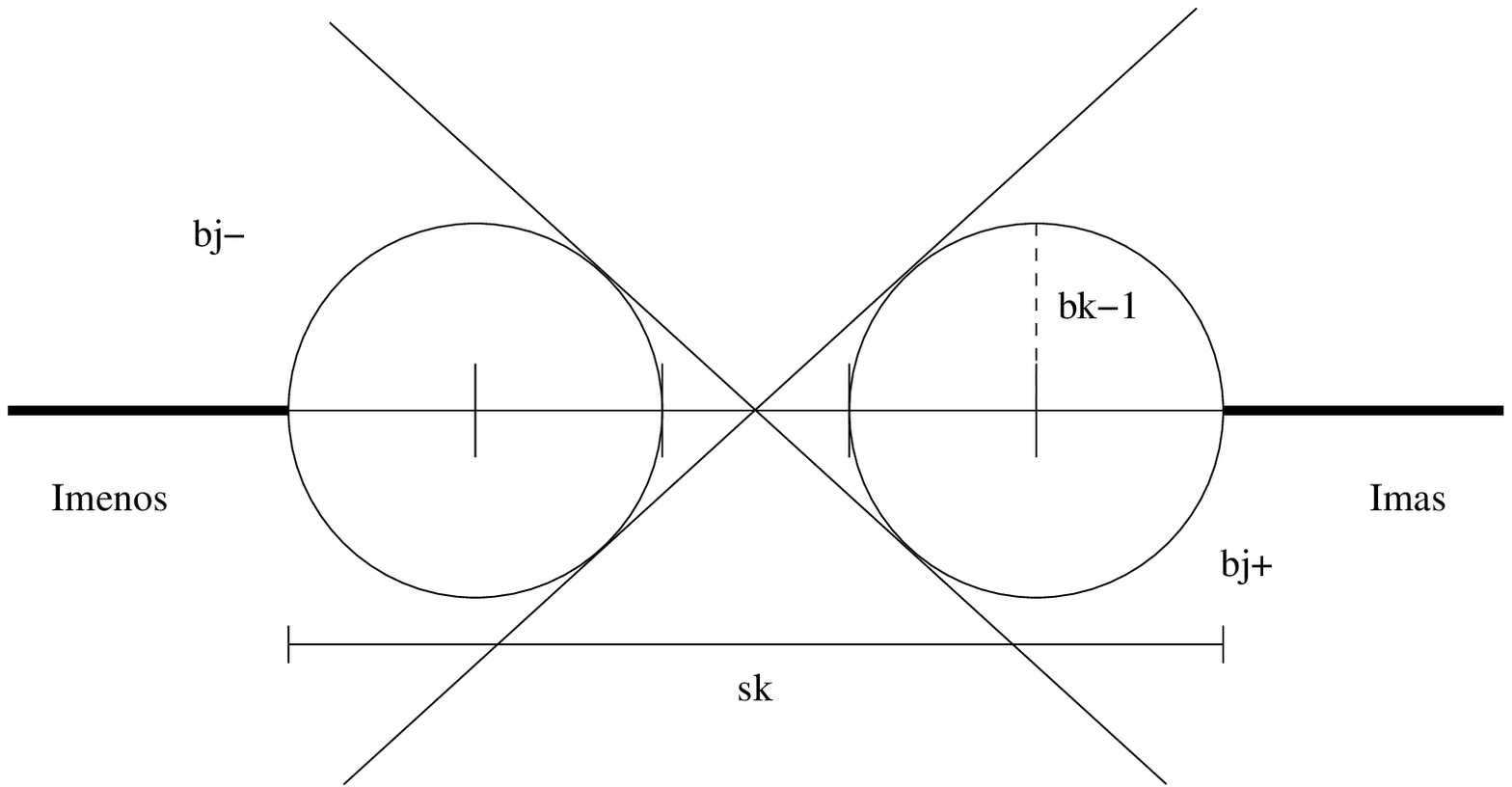}
		\caption{}
	\label{fig:conobolas}
	\end{center}
\end{figure}

Let us focus on one half of the cone (\prettyref{fig:cono}). Let $\theta$ be the width of the cone. In this case, we have to look at  $\frac{\theta}{b_k}$ directions that are $b_k$-separated. Further, we note that $\theta=\frac{2\theta_k}{s_k}$, where $\theta_k$ is the bold arc at distance $s_k/2$ from the center of the cone.

\begin{figure}[ht]
       \begin{center}
		\psfrag{arco1}{$\theta$}
		\psfrag{uno}{$1$}
		\psfrag{arcos_k/2}{$\theta_k$}
		\psfrag{b_k-1}{$b_{k-1}$}
		\psfrag{s_k/2}{$\frac{s_k}{2}$}
		\includegraphics[scale=.3]{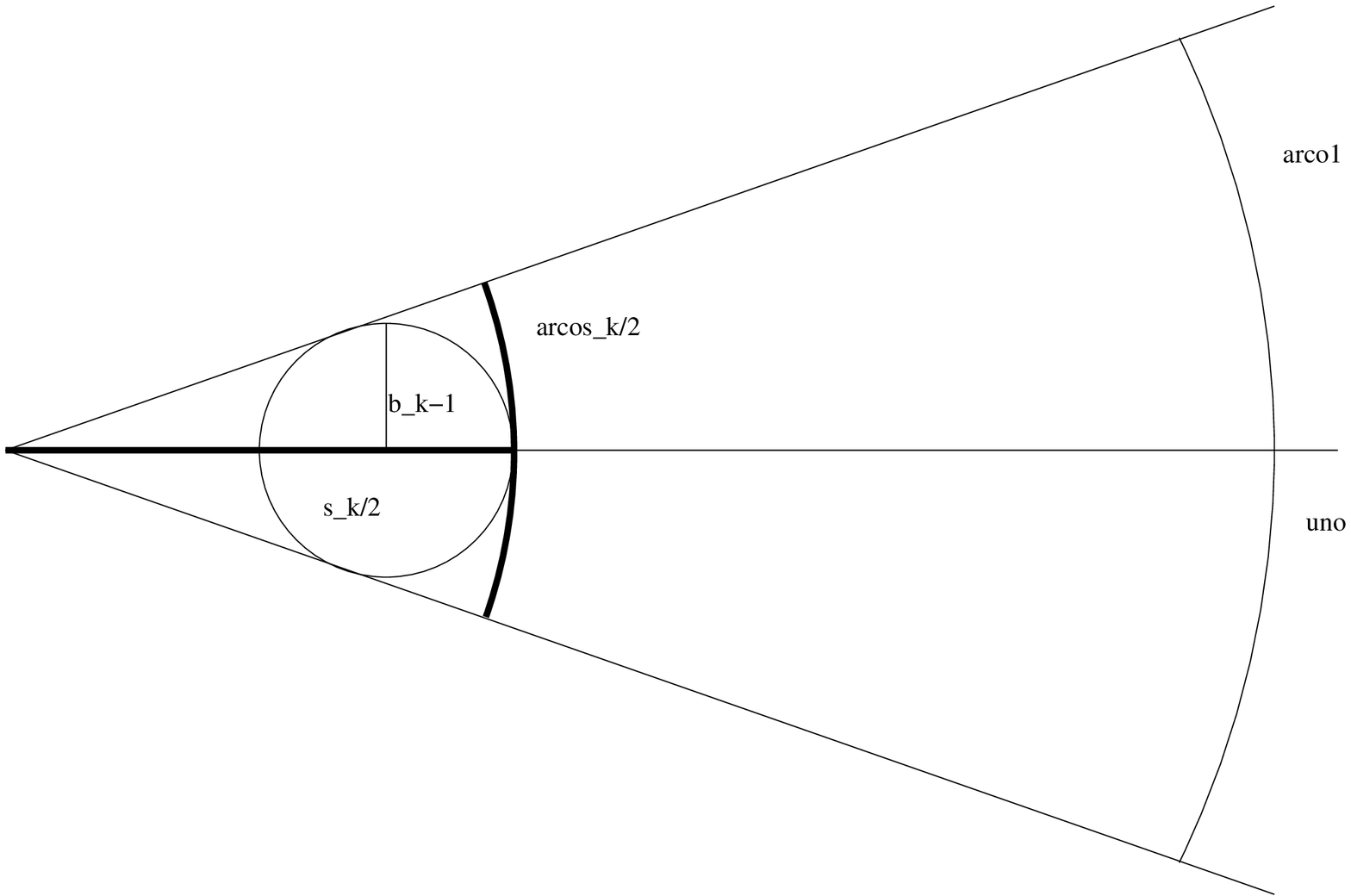}
		\caption{}\label{fig:cono}
	\end{center}
\end{figure}

It is clear that $\theta_k\sim b_{k-1}$, and therefore the number $D$ of lines in $b_k$-separated directions with non-empty intersection with $B_{j_-}$ and $B_{j_+}$ has to satisfy $D\le \frac{\theta}{b_k}=\frac{2\theta_k}{s_kb_k}\sim\frac{b_{k-1}}{b_k}\frac{1}{s_k}$.

The lemma follows by summing on all pairs $(j_-,j_+)$.
\end{proof}

Now we can prove the main result of this section. We have the following theorem:

\begin{theorem}\label{thm:htohsqrt}
	Let $\h\in\mathbb{H}_d$ be a dimension function such that $\h(x)\lesssim x^\alpha$ for some $0<\alpha<1$ and $E$ be an $F_\h$-set. Let $h\in\mathbb{H}$ with $h\prec \h$.  If ${\D\sum_k} \left(\frac{\h(2^{-k})}{h(2^{-k})}\right)^{\frac{2\alpha}{2\alpha+1}}<\infty$, then $\HH{h\sqrt{\cdot}}(E)>0$.
\end{theorem}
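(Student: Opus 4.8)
The plan is to reuse the scheme behind \prettyref{thm:htoh2} --- a dyadic decomposition of a covering, the decomposition $\s=\bigcup_k\Omega_k$ coming from \prettyref{lem:part}, and then a covering-independent lower bound on $\#J_k$ --- but with the Kakeya maximal inequality replaced by the purely combinatorial incidence count of \prettyref{lem:conobolas}. Fix $h\prec\h$ with $\sum_k\left(\h(2^{-k})/h(2^{-k})\right)^{\frac{2\alpha}{2\alpha+1}}<\infty$. Since $\HH{h\sqrt{\cdot}}(E)=\sup_{\delta>0}\hh{h\sqrt{\cdot}}(E)$, it suffices to produce $\delta_0>0$ and $c>0$ such that $\sum_j h(r_j)\sqrt{r_j}\ge c$ for every $\delta_0$-covering $\{B_j=B(x_j;r_j)\}$ of $E$; monotonicity of $h$ and $\sqrt{\cdot}$ then upgrades this to the bound $\sum_j(h\sqrt{\cdot})(\diam B_j)\gtrsim1$ we actually want. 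Grouping the $B_j$ by the dyadic scale, $J_k=\{j:2^{-k}<r_j\le2^{-k+1}\}$, $E_k=E\cap\bigcup_{j\in J_k}B_j$, one has $\sum_j h(r_j)\sqrt{r_j}\ge\sum_k 2^{-k/2}h(2^{-k})\#J_k$, and $J_k=\emptyset$ unless $2^{-k}<\delta_0/2$, so only large $k$ occur and everything reduces to bounding $\#J_k$ below. As in \prettyref{sec:h2}, the relevant summable sequence is dictated by the target gap $\Delta_0=h/\h$: set
\[
a_k:=\left(\frac{\h(2^{-k})}{h(2^{-k})}\right)^{\frac{2\alpha}{2\alpha+1}},
\]
so that $\a=\{a_k\}\in\ell^1$ is exactly the hypothesis, and \prettyref{lem:part} yields $\s=\bigcup_k\Omega_k$ with $\hh{\h}(\ell_e\cap E_k)\ge a_k/(2\|\a\|_1)=:\eta_k$ for $e\in\Omega_k$.

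Next I would, for each $k$, manufacture a separated pair of interval families over the directions in $\Omega_k$. Fix $k$ and $e\in\Omega_k$. By \prettyref{lem:split} there are subintervals $I^-_e,I^+_e\sub\ell_e$ that are $\h^{-1}(\eta_k/8)$-separated with $\hh{\h}(I^\pm_e\cap E_k)\gtrsim\eta_k$. Since $I^\pm_e\cap E_k$ is covered by those $B_j$, $j\in J_k$, that meet it --- say there are $\#J^\pm_k(e)$ of them, each with $\h(\diam B_j)\lesssim\h(2^{-k})$ by the doubling of $\h$ --- we get
\[
\#J^\pm_k(e)\ \gtrsim\ \frac{\eta_k}{\h(2^{-k})}\ \gtrsim\ \frac{a_k}{\h(2^{-k})}.
\]
This is where the hypothesis $\h(x)\lesssim x^\alpha$ is used quantitatively: from $\h(x)\le Cx^\alpha$ one gets $s_k:=\h^{-1}(\eta_k/8)\gtrsim\eta_k^{1/\alpha}\gtrsim a_k^{1/\alpha}$, so the $\h$-mass gap is converted into a genuine \emph{metric} separation $s_k$ between $I^-_e$ and $I^+_e$, uniform over $e\in\Omega_k$. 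Finally pick a maximal $2^{-k}$-separated set $\{e_i\}_{i=1}^{M_k}\sub\Omega_k$; by maximality $M_k\gtrsim 2^k\sigma(\Omega_k)$.

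Then comes the double counting of the set $\mathcal{L}^{\b}_k$ from \prettyref{lem:conobolas}, taken with $\b=\{2^{-k}\}$, the directions $\{e_i\}_{i=1}^{M_k}$, the intervals $I^{\pm}_{e_i}$ just built, and their common separation $s_k$. On one hand, that lemma gives the upper bound $\#\mathcal{L}^{\b}_k\lesssim s_k^{-1}(\#J_k)^2$ (its hypothesis $\tfrac15 s_k>2^{-k+1}$ holds throughout the relevant range of $k$ once $\delta_0$ is small, since $s_k\gtrsim a_k^{1/\alpha}$ and for large $k$ the factor $a_k$ dominates $\h(2^{-k})$ by an arbitrarily large amount). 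On the other hand, each $e_i$ contributes at least $\#J^-_k(e_i)\,\#J^+_k(e_i)\gtrsim(a_k/\h(2^{-k}))^2$ triples, so $\#\mathcal{L}^{\b}_k\gtrsim M_k(a_k/\h(2^{-k}))^2$. Comparing, and inserting $M_k\gtrsim2^k\sigma(\Omega_k)$ and $s_k\gtrsim a_k^{1/\alpha}$,
\[
\#J_k\ \gtrsim\ \sqrt{s_k M_k}\,\frac{a_k}{\h(2^{-k})}\ \gtrsim\ 2^{k/2}\sqrt{\sigma(\Omega_k)}\,\frac{a_k^{\frac{2\alpha+1}{2\alpha}}}{\h(2^{-k})}.
\]
Plugging this into $\sum_j h(r_j)\sqrt{r_j}\ge\sum_k 2^{-k/2}h(2^{-k})\#J_k$, the powers $2^{\pm k/2}$ cancel, and by the very choice of $a_k$ one has $a_k^{\frac{2\alpha+1}{2\alpha}}h(2^{-k})/\h(2^{-k})=1$; hence, using $\sum_k\sqrt{t_k}\ge\sqrt{\sum_k t_k}$ for $t_k\ge0$ and $\s=\bigcup_k\Omega_k$,
\[
\sum_j h(r_j)\sqrt{r_j}\ \gtrsim\ \sum_k\sqrt{\sigma(\Omega_k)}\ \ge\ \sqrt{\textstyle\sum_k\sigma(\Omega_k)}\ \ge\ \sqrt{\sigma(\s)}\ >\ 0,
\]
which is the assertion.

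The step I expect to require real care --- rather than the incidence estimate itself, which is already packaged in \prettyref{lem:conobolas} --- is reconciling the two lemmas at the borderline scales $2^{-k}\approx\delta_0$: \prettyref{lem:split} needs $\h^{-1}(\eta_k/8)$ to lie \emph{below} the scale of the pre-measure $\hh{\h}$, while \prettyref{lem:conobolas} needs $s_k=\h^{-1}(\eta_k/8)$ to lie \emph{above} $5\cdot2^{-k+1}$, and these pull in opposite directions unless one fixes the pre-measure at a scale $\delta'\in(0,\delta_E)$ independent of the covering and then shrinks $\delta_0$ so that every nonempty $J_k$ sits where $\eta_k$ is already tiny. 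Everything else is bookkeeping; in particular the exponent $\frac{2\alpha}{2\alpha+1}$ in $a_k$ (equivalently, in the hypothesis) is precisely what is forced by demanding simultaneously that the $2^{-k}$-powers cancel and that $\a$ be summable.
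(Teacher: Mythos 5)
Your proposal is correct and follows essentially the same route as the paper's proof: the same sequence $a_k=\left(\h(2^{-k})/h(2^{-k})\right)^{\frac{2\alpha}{2\alpha+1}}$ fed into \prettyref{lem:part}, the same use of \prettyref{lem:split} to produce the separated interval pair, the same double count of the incidence set via \prettyref{lem:conobolas}, and the same cancellation of the $2^{\pm k/2}$ factors at the end. The borderline-scale issue you single out is genuine, and your remedy --- running the pre-measure at a fixed scale below $\delta_E$ and shrinking the covering scale so that $\h^{-1}(a_k/(16\|\a\|_1))$ sits below it for every nonempty $J_k$ --- is precisely what the paper's condition \prettyref{eq:hiplemasplit} together with the restriction $\delta<\min\{\delta_E,2^{-k_0}\}$ is meant to accomplish.
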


\begin{proof}

We begin in the same way as in the previous section. Again by \prettyref{def:general}, since $E\in F_\h$, we have $\hh{\h}(\ell_e\cap E)>1$
for all $e\in\s$ for any $\delta<\delta_E$. 

Consider the sequence $\a=\left\{\Delta_0^{-\frac{2\alpha}{2\alpha+1}}(2^{-k})\right\}_{k}$. Let $k_0$ be such that
\begin{equation}\label{eq:hiplemasplit}
\h^{-1}\left(\dfrac{a_k}{16\|\a\|_1}\right)<\delta_E \qquad \text{for any }k\ge k_0.
\end{equation} 
Now take any $\delta$-covering $\mathcal{B}=\{B_j\}$ of $E$ by balls with $\delta<\min\{\delta_E, 2^{-k_0}\}$. Using \prettyref{lem:part} we obtain $\s=\bigcup_k\Omega_k$ with 

\begin{equation}\label{eq:mass}
	\Omega_k=\left\{e\in\Omega: \hh{\h}(\ell_e\cap E_k)\ge \frac{ a_k}{2\|\a\|_1}\right\}.
\end{equation}
Again we have  $E_k=E\cap\bigcup_{j\in J_k}B_j$, but by our choice of $\delta$, the sets $E_k$ are empty for $k<k_0$. Therefore the same holds trivially for $\Omega_k$ and we have that  $\s=\bigcup_{k\ge k_0}\Omega_k$.

The following argument is Remark 1.5 in \cite{wol99b}. Since for each $e\in\Omega_k$ we have  \prettyref{eq:hiplemasplit}, we can apply \prettyref{lem:split} with $\eta=\frac{a_k}{2\|\a\|_1}$ to $\ell_e\cap E_k$. Therefore we obtain two intervals $I_e^-$ and $I_e^+$, contained in $\ell_e$ with
\begin{equation}
	\hh{\h}(I^{\pm}_e\cap E_k)\gtrsim a_k\nonumber
\end{equation} 
that are $\h^{-1}(ra_k)$-separated for $r=\frac{1}{16\|\a\|_1}$.

Let $\{ e^k_{j}\}_{j=1}^{M_k}$ be a $2^{-k}$-separated subset of $\Omega_k$. Therefore $M_k\gtrsim2^{k}\sigma(\Omega_k)$. Define $\Pi_k:=J_{k}\times
J_{k}\times\{1,..,M_k\}$ and 
\begin{equation}\label{eq:tauk}
\mathcal{T}_k:=\left\{(j_{-},j_{+},i)\in \Pi_k:
    	I_{e_{i}}^-\cap E_{k}\cap B_{j_-}\neq\emptyset\ 
	I_{e_{i}}^+\cap E_{k}\cap B_{j_+}\neq\emptyset
 \right\}.
\end{equation} 
We will count the elements of $\mathcal{T}_k$ in two different ways. First, fix $j_{-}$ and $j_{+}$ and count for how many values of $i$ the triplet $(j_{-},j_{+},i)$ belongs to $\mathcal{T}_k$. 

For this, we will apply \prettyref{lem:conobolas} for the choice $\b=\{2^{-k}\}$. The estimate we obtain is the number of $2^{-k}$-separated directions $e_i$, that intersect simultaneously the balls $B_{j_-}$ and $B_{j_+}$, given that these balls are separated. We obtain
\begin{equation}\label{eq:jfijo}
\#\mathcal{T}_k\lesssim\frac{1}{\h^{-1}(ra_k)}\left(\#J_{k}\right)^{2}.
\end{equation} 
Second, fix $i$. In this case, we have by hypothesis that  $\hh{\h}(I_{e_i}^{+}\cap E_{k})\gtrsim a_k$, so $\sum_{j_{+}}\h(r_{j_+})\gtrsim a_k$. Therefore, 

\begin{equation}
 a_k\lesssim\sum_{(j_-,j_+,i)\in\mathcal{T}_k}\h(r_{j_+})\le K \h(2^{-k}),\nonumber
\end{equation} 

where $K$ is the number of elements of the sum. Therefore $K\gtrsim \frac{a_k}{\h(2^{-k})}$. The same holds for $j_{-}$, so
\begin{equation}\label{eq:ifijo}
\#\mathcal{T}_k\gtrsim M_k\left(\frac{a_k}{\h(2^{-k})}\right)^2.
\end{equation} 
Combining the two bounds,
\begin{eqnarray*}
\#J_{k} & \gtrsim &
(\#\mathcal{T}_k)^{1/2}\h^{-1}(ra_k)^{1/2}\\
 & \gtrsim & M_k^{1/2}\frac{a_k}{\h(2^{-k})}\h^{-1}(ra_k)^{1/2}\\
 & \gtrsim &2^{\frac{k}{2}}\sigma(\Omega_k)^{1/2}\frac{a_k}{\h(2^{-k})}\h^{-1}(ra_k)^{1/2}.
\end{eqnarray*}
Consider now a dimension function $h\prec \h$ as in the hypothesis of the theorem. Then again
\begin{eqnarray}\label{eq:cotahaushsqrt}
\sum_j h(r_j)r_j^{1/2}& \ge &\sum_{k} \h(2^{-k})\Delta_0(2^{-k})2^{-\frac{k}{2}}\#J_k\\\nonumber
&\gtrsim&\sum_{k\ge k_0} \sigma(\Omega_k)^{1/2}\Delta_0(2^{-k})a_k\h^{-1}(ra_k)^{1/2}.
\end{eqnarray}

To bound this last expression, we use first that there exists $\alpha\in(0,1)$ with $\h(x)\lesssim x^\alpha$ and therefore $\h^{-1}(x)\gtrsim x^{\frac{1}{\alpha}}$. We then recall the definition of the sequence $\a$,  $a_k=\Delta_0(2^{-k})^{-\frac{2\alpha}{1+2\alpha}}$ to obtain
 
\begin{eqnarray}\label{eq:dompower}
\sum_j h(r_j)r_j^{1/2}& \gtrsim &\sum_{k\ge k_0} \sigma(\Omega_k)^{1/2}\Delta_0(2^{-k})a_k^{\frac{1+2\alpha}{2\alpha}}\\
&=& \sum_{k\ge k_0}\sigma(\Omega_k)^{1/2}\gtrsim 1\nonumber.
\end{eqnarray}

\end{proof}

The next corollary follows from \prettyref{thm:htohsqrt} in the same way as \prettyref{cor:coro1} follows from \prettyref{thm:htoh2}.

\begin{corollary}\label{cor:coro2}
	Let $E$ be an $F^+_\alpha$-set. If $h$ is a dimension function satisfying $h(x)\ge C x^\alpha\sqrt{x} \log^{\theta}(\frac{1}{x})$ for $\theta>\frac{1+2\alpha}{2\alpha}$ then $\HH{h}(E)>0$.

\end{corollary}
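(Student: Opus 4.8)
The plan is to derive Corollary~\ref{cor:coro2} from Theorem~\ref{thm:htohsqrt} by the same specialization that produced Corollary~\ref{cor:coro1} from Theorem~\ref{thm:htoh2}: take $\h(x)=x^\alpha$, so that $\h\in\mathbb{H}_d$ and trivially $\h(x)\lesssim x^\alpha$ with $0<\alpha<1$, and show that the given $h$ satisfies the two hypotheses of the theorem — namely $h\prec\h$ and the summability condition ${\D\sum_k}\left(\frac{\h(2^{-k})}{h(2^{-k})}\right)^{\frac{2\alpha}{2\alpha+1}}<\infty$ — after which $\HH{h\sqrt{\cdot}}(E)>0$ gives exactly the statement (absorbing the $\sqrt{\cdot}$ into $h$; note $h(x)\sqrt{x}$ and $h(x)$ play the roles of $h\sqrt{\cdot}$ and $h$ here, so one should apply the theorem to $\tilde h(x):=h(x)/\sqrt{x}=Cx^\alpha\log^\theta(1/x)\prec x^\alpha$).

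First I would verify $h\prec\h$: with $\tilde h(x)=Cx^\alpha\log^\theta(1/x)$ and $\h(x)=x^\alpha$ one has $\h(x)/\tilde h(x)=C^{-1}\log^{-\theta}(1/x)\to 0$ as $x\to0^+$ since $\theta>\frac{1+2\alpha}{2\alpha}>0$, so indeed $\tilde h\prec\h$. Next, the key computation: $\Delta_0(\tilde h,\h)(x)=\h(x)/\tilde h(x)=C^{-1}\log^{-\theta}(1/x)$, so
\begin{equation}
\left(\frac{\h(2^{-k})}{\tilde h(2^{-k})}\right)^{\frac{2\alpha}{2\alpha+1}}\sim\left(\log(2^{k})\right)^{-\theta\cdot\frac{2\alpha}{2\alpha+1}}\sim k^{-\frac{2\alpha\theta}{2\alpha+1}}.\nonumber
\end{equation}
This series converges precisely when $\frac{2\alpha\theta}{2\alpha+1}>1$, i.e. when $\theta>\frac{1+2\alpha}{2\alpha}$, which is exactly the hypothesis on $\theta$. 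Hence both hypotheses of Theorem~\ref{thm:htohsqrt} hold, and the conclusion $\HH{\tilde h\sqrt{\cdot}}(E)>0$ is $\HH{h}(E)>0$.

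There is no real obstacle here — the work is the elementary check that the $\log$-power condition translates into the summability condition via the exponent $\frac{2\alpha}{2\alpha+1}$; the only point requiring a little care is the bookkeeping of which function carries the extra $\sqrt{\cdot}$ factor, and confirming that the threshold $\theta>\frac{1+2\alpha}{2\alpha}$ is sharp for convergence of $\sum_k k^{-2\alpha\theta/(2\alpha+1)}$ rather than, say, $\geq$. One should also remark that for an $F^+_\alpha$-set the hypothesis of Definition~\ref{def:general} is met after the pigeonholing of Remark~\ref{rem:hypot}, so Theorem~\ref{thm:htohsqrt} genuinely applies.

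\begin{proof}
Apply \prettyref{thm:htohsqrt} with $\h(x)=x^\alpha$, which belongs to $\mathbb{H}_d$ and satisfies $\h(x)\lesssim x^\alpha$ with $0<\alpha<1$, and with the dimension function $\tilde h(x):=h(x)x^{-1/2}=Cx^\alpha\log^\theta(\tfrac1x)$ in place of the ``$h$'' of the theorem, so that ``$h\sqrt{\cdot}$'' becomes $h$. Since $\theta>\frac{1+2\alpha}{2\alpha}>0$ we have $\h(x)/\tilde h(x)=C^{-1}\log^{-\theta}(\tfrac1x)\to0$ as $x\to0^+$, hence $\tilde h\prec\h$. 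Moreover
\begin{equation}
\left(\frac{\h(2^{-k})}{\tilde h(2^{-k})}\right)^{\frac{2\alpha}{2\alpha+1}}\sim k^{-\frac{2\alpha\theta}{2\alpha+1}},\nonumber
\end{equation}
and $\frac{2\alpha\theta}{2\alpha+1}>1$ precisely because $\theta>\frac{1+2\alpha}{2\alpha}$; therefore $\sum_k\left(\frac{\h(2^{-k})}{\tilde h(2^{-k})}\right)^{\frac{2\alpha}{2\alpha+1}}<\infty$. Both hypotheses of \prettyref{thm:htohsqrt} being satisfied, we conclude $\HH{\tilde h\sqrt{\cdot}}(E)=\HH{h}(E)>0$.
\end{proof}
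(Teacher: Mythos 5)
Your proof is correct and is exactly the argument the paper intends: the paper gives no separate proof of Corollary~\ref{cor:coro2}, saying only that it follows from Theorem~\ref{thm:htohsqrt} just as Corollary~\ref{cor:coro1} follows from Theorem~\ref{thm:htoh2}, and your specialization $\h(x)=x^\alpha$, $\tilde h(x)=Cx^\alpha\log^\theta(\tfrac1x)$ with the check that $\sum_k k^{-2\alpha\theta/(2\alpha+1)}<\infty$ exactly when $\theta>\frac{1+2\alpha}{2\alpha}$ is that omitted computation. (The only cosmetic point: since the hypothesis is $h(x)\ge Cx^{\alpha}\sqrt{x}\log^\theta(\tfrac1x)$ rather than equality, one should apply the theorem to the extremal gauge and use monotonicity of $\HH{g}$ in $g$, which you implicitly do.)
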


\begin{remark}
Note that at the critical value $\alpha=\frac{1}{2}$, we can compare \prettyref{cor:coro1} and \prettyref{cor:coro2}. The first says that in order to obtain $\HH{h}(E)>0$ for an $F_{\frac{1}{2}}^+$-set $E$ 
it is sufficient to require that the dimension function $h$ satisfies the bound $h(x)\ge Cx\log^\theta(\frac{1}{x})$ for $\theta>3$. On the other hand, the latter says that it is sufficient that $h$ satisfies the bound $h(x)\ge x\log^\theta(\frac{1}{x})$ for $\theta>2$. In both cases we prove that an $F_{\frac{1}{2}}^+$-set must have Hausdorff dimension at least 1, but \prettyref{cor:coro2} gives a better estimate on the logarithmic gap.
\end{remark}

\section{\texorpdfstring{$F_0$}{F0} - sets
}\label{sec:F0}

In this section we look at a class of very small Furstenberg sets. We will study, roughly speaking, the extremal case of $F_0$-sets and ask ourselves if inequality \prettyref{eq:dim} can be extended to this class. According to the definition of $F_\alpha$-sets, this class should be the one formed by sets having a zero dimensional linear set in every direction. We will call a dimension function $\h$ ``zero dimensional'' if $\h\prec x^\alpha$ for all $\alpha>0$.
Let us introduce the following subclasses of $F_0$:
\begin{table}[H]
\centering
\begin{tabular}{rl}
  $F_0^k$: & $E\in F_0^k$ if it contains at least $k$ points in every direction.\\ 
&\\
$F_0^\N$: & $E\in F_0^\N$ if it contains at least countable points in every direction.\\
&\\
$F_0^\h$: & $E\in F_0^\h$ if it belongs to $F_\h$ for a zero dimensional $\h\in\mathbb{H}$.
\end{tabular}
\end{table}
Our approach to the problem, using dimension functions, allows us to tackle the problem about the dimensionality of these sets in some cases.
We study the case of $F_\h$-sets associated to one particular choice of $\h$. We will look at the function $\h(x)=\dfrac{1}{\log(\frac{1}{x})}$ as a model of ``zero dimensional" dimension function. Our next theorem will show that in this case inequality \prettyref{eq:dim} can indeed be extended.
The trick here will be to replace the dyadic scale on the radii in $J_k$ with a faster decreasing sequence $\b=\{b_k\}_{k\in\N}$. 

The main difference will be in the estimate of the quantity of lines in $b_k$-separated directions that intersect two balls of level $J_k$ with a fixed distance $s_k$ between them. 
This estimate is given by \prettyref{lem:conobolas}.

We can prove the next theorem, which provide a class of examples of zero dimensional $F_\h$-sets.

\begin{theorem}
	Let $\h(x)=\frac{1}{\log(\frac{1}{x})}$ and let $E$ be an $F_\h$-set. Then $\dim(E)\ge \frac{1}{2}$.
\end{theorem}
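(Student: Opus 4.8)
The plan is to adapt the argument of \prettyref{thm:htohsqrt} to the specific function $\h(x)=1/\log(\tfrac1x)$, but with the dyadic scale $\{2^{-k}\}$ in the radii replaced by a faster-decreasing sequence $\b=\{b_k\}$, so that the cone-counting estimate of \prettyref{lem:conobolas} becomes favorable enough to compensate for the near-zero dimensionality of $\h$. The target is to show $\HH{h}(E)>0$ for every $h(x)=x^{1/2-\e}$ (equivalently $h(x)=x^{1/2}/g(x)$ for suitable slowly varying $g$), which yields $\dim(E)\ge\frac12$. First I would fix a covering $\mathcal B=\{B_j\}$ of $E$ by balls of radius $r_j<\delta$ and, exactly as before, apply \prettyref{lem:part} with a summable sequence $\a=\{a_k\}$ adapted to $\b$ and to the desired exponent, obtaining $\s=\bigcup_k\Omega_k$ with $\hh{\h}(\ell_e\cap E_k)\gtrsim a_k$ for $e\in\Omega_k$, where now $E_k=E\cap\bigcup_{j\in J^\b_k}B_j$. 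Then apply \prettyref{lem:split} to split each $\ell_e\cap E_k$ into two pieces $I_e^\pm$ that are $\h^{-1}(r a_k)$-separated; here the crucial point is that $\h^{-1}(t)=e^{-1/t}$ decays \emph{extremely} fast as $t\to0$, so the separation $s_k=\h^{-1}(ra_k)$ is tiny, and the hypothesis $\tfrac15 s_k>b_{k-1}$ of \prettyref{lem:conobolas} forces $b_k$ to decrease roughly like $e^{-c/a_k}$ — this is the source of the ``faster decreasing sequence.''

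Next, for a $b_k$-separated set $\{e_i\}_{i=1}^{M_k}\subset\Omega_k$ with $M_k\gtrsim \sigma(\Omega_k)/b_k$, form the triple set $\mathcal T_k$ as in \eqref{eq:tauk} and count it two ways. Counting over $i$ for fixed $(j_-,j_+)$ via \prettyref{lem:conobolas} gives $\#\mathcal T_k\lesssim \tfrac{b_{k-1}}{b_k}\tfrac1{s_k}(\#J^\b_k)^2$; counting over $(j_-,j_+)$ for fixed $i$ using $\sum_{j_+\in J^\b_k(e_i)}\h(b_{k-1})\gtrsim \hh{\h}(I^+_{e_i}\cap E_k)\gtrsim a_k$ gives $\#\mathcal T_k\gtrsim M_k (a_k/\h(b_{k-1}))^2$. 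Combining these yields the lower bound
\begin{equation}
\#J^\b_k \gtrsim \left(M_k \frac{b_k}{b_{k-1}} s_k\right)^{1/2}\frac{a_k}{\h(b_{k-1})} \gtrsim \sigma(\Omega_k)^{1/2} s_k^{1/2}\frac{a_k}{\h(b_{k-1})},\nonumber
\end{equation}
independent of the covering. Finally I would estimate $\sum_j h(r_j)\gtrsim \sum_k h(b_k)\#J^\b_k$ from below and choose $\b$, $\a$, and the admissible exponent $1/2-\e$ of $h$ so that $\sum_k h(b_k)\sigma(\Omega_k)^{1/2}s_k^{1/2} a_k/\h(b_{k-1}) \gtrsim \sum_k\sigma(\Omega_k)\gtrsim \sigma(\s)>0$, using $\sigma(\Omega_k)^{1/2}\ge\sigma(\Omega_k)$ after the other factors are arranged to be $\gtrsim 1$ termwise.

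The main obstacle is the delicate simultaneous choice of the three coupled sequences $\b=\{b_k\}$, $\a=\{a_k\}$ (which must stay in $\ell^1$ for \prettyref{lem:part}), and the exponent of $h$, subject to the constraint $\tfrac15\h^{-1}(r a_k)>b_{k-1}$ from \prettyref{lem:conobolas} and to the decay requirement $b_k\to0$; one must verify that with $\h(x)=1/\log(\tfrac1x)$ there is genuine slack — i.e.\ that $\h$ is ``just barely'' zero-dimensional enough that $b_k$ can still be taken to decay fast enough (super-exponentially, something like $b_k\sim e^{-k^2}$ or $b_k\sim e^{-c/a_k}$ with $a_k\sim k^{-2}$) while keeping $h(b_k)\#J^\b_k$ summably bounded below. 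A secondary technical point is handling the shift from radius $b_k$ to $b_{k-1}$ inside $J^\b_k$, which is harmless because $\h$ is doubling along the chosen scale, but must be checked. Once the bookkeeping is set up, the geometric heart — the cone estimate of \prettyref{lem:conobolas} combined with the splitting Lemma \prettyref{lem:split} — does all the work, exactly as in the $\h\to\h\sqrt{\cdot}$ case, and the extra speed of $\b$ exactly absorbs the logarithmic weakness of $\h$.
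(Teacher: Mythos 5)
Your overall strategy is exactly the paper's: the same decomposition of $\s$ via \prettyref{lem:part}, the splitting of $\ell_e\cap E_k$ via \prettyref{lem:split}, the double count of the triple set $\mathcal{T}^\b_k$ against the cone estimate of \prettyref{lem:conobolas}, and the replacement of the dyadic scale by a much faster decreasing sequence $\b$ so that the constraint $\tfrac15 s_k>b_{k-1}$ with $s_k=\h^{-1}(ra_k)=e^{-1/(ra_k)}$ can be met and its cost absorbed. (The paper takes $a_k=k^{-2}$ and the hyperdyadic scale $b_k=2^{-(1+\e)^k}$; your suggestion $b_k\sim e^{-Ck^2}$ also works, provided $C$ is taken large compared with the constant appearing in $s_k$.)

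There is, however, one step in your write-up that, carried through as stated, makes the final summation fail. After combining the two counts you correctly reach $\#J^\b_k\gtrsim\bigl(M_k\,\tfrac{b_k}{b_{k-1}}\,s_k\bigr)^{1/2}\tfrac{a_k}{\h(b_{k-1})}$, but when you substitute $M_k\gtrsim\sigma(\Omega_k)/b_k$ you discard the resulting factor $b_{k-1}^{-1/2}$ and keep only $\sigma(\Omega_k)^{1/2}s_k^{1/2}a_k/\h(b_{k-1})$. That discarded factor is the engine of the whole argument. With your lossy bound, the quantity you need to be $\gtrsim 1$ termwise is $h(b_k)\,s_k^{1/2}\,a_k/\h(b_{k-1})$, where $h(b_k)=b_k^{1/2-\e}\to 0$ super-exponentially, $s_k^{1/2}=e^{-ck^2}\to 0$, and $1/\h(b_{k-1})=\log(1/b_{k-1})$ grows only like the exponent of $b_{k-1}$; this product tends to zero extremely fast and no admissible choice of $\a$, $\b$ and $\e$ rescues it. Retaining the factor, the termwise quantity (after squaring via $\sum_k x_k\ge(\sum_k x_k^2)^{1/2}$) becomes $\frac{b_k^{2\beta}}{b_{k-1}}\,s_k\,a_k^2\,\h(b_{k-1})^{-2}$, and it is precisely the ratio $b_k^{2\beta}/b_{k-1}$ --- double-exponentially large when $\beta<\frac{1}{2(1+\e)}$ and $\b$ is hyperdyadic --- that beats $s_k^{-1}=e^{ck^2}$ and the remaining polynomial losses. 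The repair is one line, but as written your concluding step would not close.
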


\begin{proof}
Take a non-negative sequence $\b$ which will be determined later. We will apply the splitting \prettyref{lem:split} as in the previous section. For this, take $k_0$ as in \prettyref{eq:hiplemasplit} associated to the sequence $\a=\{{k^{-2}}\}_{k\in\N}$. Now, for a given generic $\delta$-covering of $E$ with $\delta<\min\{\delta_E,2^{-k_0}\}$, we use  \prettyref{lem:part} to obtain a decomposition $\s=\bigcup_{k\ge k_0}\Omega_k$ with
\begin{equation}
	\Omega_k=\left\{e\in\s: \hh{\h}(\ell_e\cap E_k)\ge ck^{-2}\right\},\nonumber
\end{equation}
where $E_k=E\cap \bigcup_{J_k^\b}B_j$, $J_k^\b$ is the partition of the radii  associated to $\b$ and $c>0$ is a suitable constant. 

We apply the splitting \prettyref{lem:split} to $\ell_e\cap E_k$  to obtain two $\h^{-1}(c k^{-2})$-separated intervals $I^-_e$ and $I^+_e$ with $\HH{\h}_\delta(I^\pm_e\cap E_k)\gtrsim k^{-2}$.

Now, let $\{e^k_j\}_{j=1}^{M_k}$ be a $b_k$-separated subset of $\Omega_k$. Therefore $M_k\gtrsim \Omega_k/b_k$.

We also define, as in \prettyref{thm:htohsqrt}, $\Pi_k:=J^\b_{k}\times
J^\b_{k}\times\{1,..,M_k\}$ and 
\begin{equation}
\mathcal{T}^\b_k:=\left\{(j_{-},j_{+},i)\in \Pi_k:
    	I_{e_{i}}^-\cap E_{k}\cap B_{j_-}\neq\emptyset\ 
	I_{e_{i}}^+\cap E_{k}\cap B_{j_+}\neq\emptyset
 \right\}.\nonumber
\end{equation}

By \prettyref{lem:conobolas}, we obtain
\begin{equation}\label{eq:otraescala}
\#\mathcal{T}^\b_k\lesssim\frac{b_{k-1}}{b_k}\frac{1}{\h^{-1}(ck^{-2})}(\#J^\b_{k})^{2},
\end{equation} 
and the same calculations as in \prettyref{thm:htohsqrt} (inequality \prettyref{eq:ifijo}) yield
\begin{equation}
\#J^\b_{k} \gtrsim \left(\frac{\sigma(\Omega_k)}{b_{k-1}}\right)^{1/2}\frac{\h^{-1}(ck^{-2})^{1/2}}{k^2\h(b_{k-1})}\ge
\left(\frac{\sigma(\Omega_k)}{b_{k-1}}\right)^{1/2}\frac{e^{-ck^2}}{k^2}.\nonumber
\end{equation}

Now we estimate a sum like \prettyref{eq:cotahaushsqrt}. For  $\beta<\frac{1}{2}$ we have
\begin{eqnarray}\label{eq:sumafinal}\nonumber
\sum_j r_j^{\beta}& \ge &\sum_k b_k^{\beta}\#J_k\\\nonumber
	&\ge&\sum_k \sigma(\Omega_k)^{1/2}
	\frac{b_k^\beta}{b_{k-1}^{\frac{1}{2}}}\frac{e^{-ck^2}}{k^2}\\
	&\gtrsim & \sqrt{\sum_k \sigma(\Omega_k) \frac{b_k^{2\beta}}{b_{k-1}}\frac{1}{e^{ck^2}k^4}}.
\end{eqnarray}
In the last inequality we use that the terms are all non-negative. The goal now is to take some rapidly decreasing sequence such that the factor $\frac{b_k^{2\beta}}{b_{k-1}}$ beats the factor $k^{-4}e^{-ck^2}$. 

Let us take $0<\e<\frac{1-2\beta}{2\beta}$  and consider the hyperdyadic scale $b_k=2^{-(1+\e)^k}$. With this choice, we have
\begin{equation}
	\frac{b_k^{2\beta}}{b_{k-1}}=2^{(1+\e)^{k-1}-(1+\e)^{k}2\beta}=2^{(1+\e)^{k}(\frac{1}{1+\e}-2\beta)}.\nonumber
\end{equation} 
Replacing this in inequality \prettyref{eq:sumafinal} we obtain
\begin{eqnarray*}
\left(\sum_j r_j^{\beta}\right)^2& \ge &\sum_k \sigma(\Omega_k)
			2^{(1+\e)^k(\frac{1}{1+\e}-2\beta)} % a favor
		\frac{e^{-ck^2}}{k^4} \\%en contra
& \ge & \sum_k \sigma(\Omega_k) \frac{2^{(1+\e)^k(\frac{1}{1+\e}-2\beta)}}{e^{ck^2}k^4}.
\end{eqnarray*}
Finally, since by the positivity of $\frac{1}{1+\e}-2\beta$ the double exponential in the numerator grows much faster than the denominator, we obtain 
\begin{equation}
	\frac{2^{(1+\e)^k(\frac{1}{1+\e}-2\beta)}}{e^{ck^2}k^4}\gtrsim 1,
\end{equation} 
and therefore $ \left(\sum_j r_j^{\beta}\right)^2\gtrsim\sum_k \sigma(\Omega_k)\gtrsim 1$

\end{proof}

\begin{corollary}
	Let $\theta>0$. If $E$ is an  $F_\h$-set with $\h(x)=\frac{1}{\log^\theta(\frac{1}{x})}$ then $\dim(E)\ge \frac{1}{2}$.
\end{corollary}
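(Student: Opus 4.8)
The plan is to repeat the argument of the theorem just proved almost verbatim, since the only point where the precise shape of $\h$ entered was the behaviour of $\h^{-1}$ near the origin, and for $\h(x)=\log^{-\theta}(1/x)$ this stays tame enough. First I would record the two structural facts: $\h\in\mathbb{H}_d$, because $\h(2x)/\h(x)=\bigl(\log(1/x)/\log(1/2x)\bigr)^{\theta}\to1$ as $x\to0$, so that \prettyref{lem:part}, \prettyref{lem:split} and \prettyref{lem:conobolas} all apply; and $\h$ is zero dimensional, because $x^{\alpha}/\h(x)=x^{\alpha}\log^{\theta}(1/x)\to0$ for every $\alpha>0$, i.e. $\h\prec x^{\alpha}$, so the statement really belongs to the regime of this section.

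For $\theta\ge1$ there is a shortcut that avoids repeating the proof. Writing $\h_\theta(x)=\log^{-\theta}(1/x)$, for $\theta>1$ one has $\h_\theta/\h_1\to0$ with $\h_1(x)=\log^{-1}(1/x)$, i.e. $\h_1\prec\h_\theta$; hence $\h_1(x)\ge\h_\theta(x)$ for $x$ near $0$, which gives $\HH{\h_1}(A)\ge\HH{\h_\theta}(A)$ for every set $A$. Thus every $F_{\h_\theta}$-set is an $F_{\h_1}$-set and the theorem applies directly (the case $\theta=1$ being the theorem itself). It remains to treat $0<\theta<1$.

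In that range I would run the proof of the theorem with $\a=\{k^{-2}\}$ and the hyperdyadic scale $\b=\{b_k\}$, $b_k=2^{-(1+\e)^k}$, fixing $\e$ at the end and choosing $k_0$ as in \prettyref{eq:hiplemasplit} for the present $\h$. Applying \prettyref{lem:part}, \prettyref{lem:split} and \prettyref{lem:conobolas}, and then the counting of \prettyref{eq:ifijo}, exactly as before, one obtains for every $\beta<\frac12$ the same bound
\begin{equation}
\#J^\b_k\gtrsim\left(\frac{\sigma(\Omega_k)}{b_{k-1}}\right)^{1/2}\frac{\h^{-1}(ck^{-2})^{1/2}}{k^2\,\h(b_{k-1})}.\nonumber
\end{equation}
The only computation that changes is the value of $\h^{-1}$: now $\h^{-1}(y)=\exp(-y^{-1/\theta})$, so $\h^{-1}(ck^{-2})=\exp\bigl(-c^{-1/\theta}k^{2/\theta}\bigr)$, and since $\h(b_{k-1})\le1$ for $k$ large, $\#J^\b_k\gtrsim(\sigma(\Omega_k)/b_{k-1})^{1/2}k^{-2}e^{-Ck^{2/\theta}}$ for a constant $C=C(\theta,c)$. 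Feeding this into the estimate of $\sum_j r_j^{\beta}$ as in \prettyref{eq:sumafinal} gives
\begin{equation}
\left(\sum_j r_j^{\beta}\right)^2\gtrsim\sum_k\sigma(\Omega_k)\,\frac{b_k^{2\beta}}{b_{k-1}}\,\frac{1}{e^{2Ck^{2/\theta}}k^4}
=\sum_k\sigma(\Omega_k)\,\frac{2^{(1+\e)^k\left(\frac1{1+\e}-2\beta\right)}}{e^{2Ck^{2/\theta}}k^4}.\nonumber
\end{equation}
Choosing $0<\e<\frac{1-2\beta}{2\beta}$ makes $\frac1{1+\e}-2\beta>0$, so the numerator grows like a \emph{double} exponential in $k$ while the denominator is only $e^{2Ck^{2/\theta}}k^4$, a single exponential of a fixed power of $k$; hence each summand is $\gtrsim1$ for $k$ large, and $\bigl(\sum_j r_j^{\beta}\bigr)^2\gtrsim\sum_k\sigma(\Omega_k)\gtrsim1$. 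Since this holds for every covering and every $\beta<\frac12$, $\HH{\beta}(E)>0$ for all $\beta<\frac12$, i.e. $\dim(E)\ge\frac12$.

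I do not anticipate a genuine obstacle: all the geometric content sits in the theorem, and the corollary is really the observation that the double-exponential gain built into the hyperdyadic scale is so large that it still absorbs $\h^{-1}(ck^{-2})=e^{-Ck^{2/\theta}}$ for every fixed $\theta>0$ (indeed for any $\h$ whose inverse decays no faster than $\exp(-e^{C/y})$). The only mild care needed is to recheck that the doubling constant of the new $\h$ absorbs the constant losses in \prettyref{lem:conobolas} and in the application of \prettyref{lem:split}, which is routine.
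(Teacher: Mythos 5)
Your proof is correct and follows essentially the same route as the paper: rerun the theorem's argument with the hyperdyadic scale, observing that the only change is $\h^{-1}(ck^{-2})=e^{-Ck^{2/\theta}}$, which the double exponential $2^{(1+\e)^k(\frac{1}{1+\e}-2\beta)}$ still dominates. The preliminary checks (doubling, zero-dimensionality) and the monotonicity shortcut for $\theta\ge 1$ are correct extras but not needed beyond what the paper's one-line verification already contains.
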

\begin{proof}
	This follows immediately, since in this case the only change will be  $\h^{-1}(ck^{-2})=\frac{1}{e^{(ck^2)^{\frac{1}{\theta}}}},$
so the double exponential still grows faster and therefore $\frac{2^{(1+\e)^k(\frac{1}{1+\e}-2\beta)}}{e^{(ck^2)^{\frac{1}{\theta}}}k^4}\gtrsim 1.$
\end{proof}

This shows that there is a whole class of $F_0$-sets that must be at least $\frac{1}{2}$-dimensional.

Now, in the opposite direction, we will show some examples of very small $F_0$-sets. The first observation is that it is possible to construct $F_0^k$-sets and even $F_0^\N$-sets with Hausdorff dimension not exceeding $\frac{1}{2}$. This can be done with some suitable modifications of the construction made in \cite{wol99b}, Remark 1.5, (p. 10). There, for each $0 < \alpha \leq 1$,  an $F_\alpha$-set is constructed whose dimension is not greater than $\frac{1}{2}+\frac{3}{2}\alpha$. It is straightforward to modify that construction for it to hold even at the endpoint $\alpha=0$. 

We also include the following example of an $F_0^2$-set $G$ of dimension zero. It will be constructed using the next result, which is Example 7.8 (p. 104) in \cite{fal03}.
In that example, Falconer constructs sets $E, F\sub [0,1]$ with $\dim(E)=\dim(F)=0$ and such that $[0,1]\sub E+F$.

\begin{figure}[h]
       \begin{center}
	\psfrag{E}{$E\times \{1\}$}
	\psfrag{F}{$-F\times \{0\}$}
	\psfrag{x}{$x$}
	\psfrag{-y}{$-y$}
	\psfrag{tita}{$\theta$}
	\includegraphics[scale=.58]{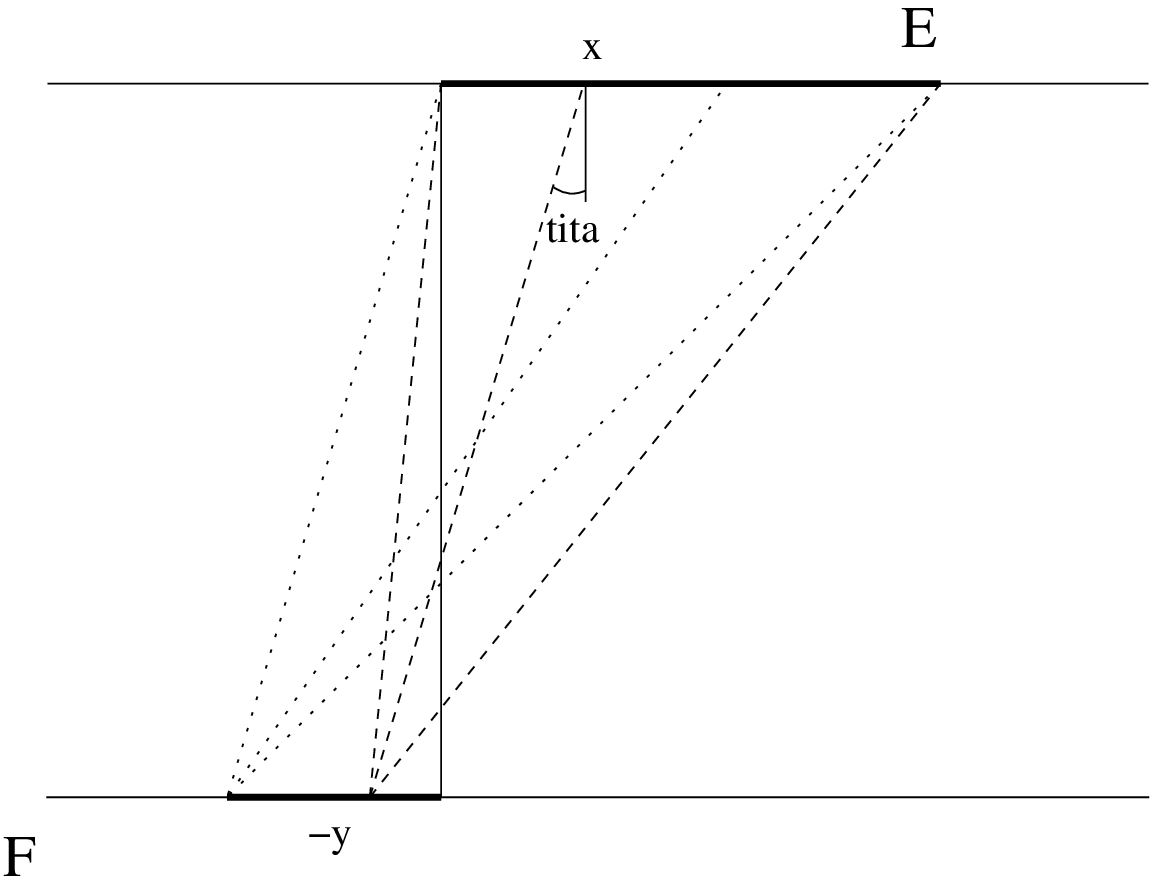}
	\caption{}\label{fig:f02}
	\end{center}
\end{figure}

Consider $G=E\times \{1\}\cup -F\times \{0\}$. This set $G$ has clearly dimension $0$, and contains two points in every direction $\theta\in [0;\frac{\pi}{4}]$. For, if $\theta\in [0;\frac{\pi}{4}]$, let $c=\tan(\theta)$, so $c\in[0,1]$. By the choice of $E$ and  $F$, we can find $x\in E$ and $y\in F$ with $c=x+y$.

The points $(-y,0)$ and $(x,1)$ belong to $G$ and determine a segment in the direction $\theta$ (\prettyref{fig:f02}).

\section{Acknowledgments}

We would like to thank to Michael T. Lacey for fruitful conversations during his visit to the Department of Mathematics at the University of Buenos Aires.

We also thank the anonymous referee for extremely careful reading of the manuscript and pointing out many subtle improvements which made this paper more readable.

%% The Appendices part is started with the command \appendix;
%% appendix sections are then done as normal sections
%% \appendix

%% \section{}
%% \label{}

% \nocite{ek06}
% \bibliographystyle{plain}
% \bibliography{refes}

\end{document}